\newcommand{\bibA}{refs_minBall}
\newcommand{\bibB}{refs_QP}	
\newcommand{\bibC}{refs_minCone}	
\newcommand{\bibD}{refs_various}	
\newcommand{\bibE}{refs_SOCP}
\title{A dual Simplex-type algorithm for the smallest enclosing ball of balls and related problems}
\author{Marta Cavaleiro\thanks{
		Rutgers University, MSIS Department \& RUTCOR, 
		100 Rockafellar Rd, Piscataway, NJ 08854.
		 \texttt{marta.cavaleiro@rutgers.edu} }  
	\and Farid Alizadeh\thanks{
		Rutgers University, MSIS Department \& RUTCOR,
		100 Rockafellar Rd, Piscataway, NJ 08854. 		
	 \texttt{farid.alizadeh@rutgers.edu} }
	}
\date{\displaydate{date}}
\begin{document}
	
\maketitle

\abstract{We define the notion of infimum of a set of points with respect to the second order cone. This problem can be showed to be equivalent to the minimum ball containing a set of balls problem and to the maximum intersecting ball problem, as well as others. We present a dual algorithm which can be viewed as an extension of the simplex method to solve this problem. }

\bigskip\noindent\textbf{Keywords:} {Smallest enclosing ball, simplex-type methods, second order cone programming, computational geometry}

\section{Introduction} Let $\Q$ denote the second-order cone, $\Q:=\left \{x:=(x_0; \xb)\in\RR^{n}:\,\norm{\xb}_2\leq x_0\right\}$. Given a set of points $\Pp=\{p_1,...,p_m\}\subset\RR^{n}$, we consider the problem defined as follows
\begin{equation} \begin{array}{rl}\label{primal}
Inf_\Q(\Pp) := \displaystyle\max_{x} \,& \inner{e_1}{x}  \\
\displaystyle 	  \st & x\leqQ p_i, \, i=1,...,m\tag{P}
\end{array}\end{equation}		
\noindent with $e_1=(1,0,...,0)$. We define this problem as the \emph{infimum of $\Pp$ with respect to~$\Q$} (given its resemblance with the problem of finding the minimum number of a set of numbers with a linear program). Throughout the article we shall refer to this problem as $Inf_\Q(\Pp)$, or simply (\ref{primal}).

Denote by $x^*$ the optimal solution to (\ref{primal}). One possible geometric interpretation of the problem in question is to find $x^*$ as ``high'' as possible (when height is defined as the value of $x_0$)  such that $x^*+\Q$ covers all points of set $\Pp$, in the sense that they either fall inside or on the boundary of $x^*+\Q$ (Figure \ref{fig:infCone}). But more interestingly, it is possible to prove that (\ref{primal}) is in fact equivalent to several relevant problems in computational geometry, such as the smallest enclosing ball of balls or the largest intersecting ball.

\begin{center}
	\begin{figure}[h]
		\centering
		\includegraphics[width=4.5cm]{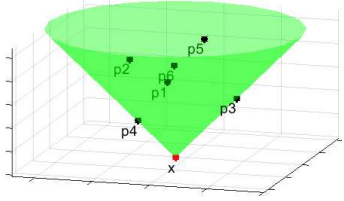}
		\caption{Geometric interpretation of the \emph{infimum of with respect to~$\Q$}.}
		\label{fig:infCone}
	\end{figure}
\end{center}

Problem (\ref{primal}) is a Second-Order Cone Program (SOCP) and so it can be solved in polynomial time using interior point methods, such as the primal-dual path following algorithms \cite{Nesterov98, Nesterov97}.%the number of iterations is $\bigO(\sqrt{m}\log{\frac1\epsilon})$}
These algorithms generate interior-points for the primal and dual problems that follow the so-called \emph{central path}, which converges to a primal-dual optimal solution in the limit. Each iteration, in terms of computational work, basically consists on the solution of a linear system to compute the search direction resulting from the application of Newton's method to the KKT conditions of a modified (\ref{primal}) with a \emph{logarithmic barrier function}. The matrix in question is positive definite (see e.g. \cite{Alizadeh03}), and usually its Cholesky factorization is used. In terms of our problem, the matrix has size $n\times n$ (note that the standard form cone LP is the dual), and would take $\bigO(mn^2)$ basic arithmetic operations to be computed \cite{Zhou05} 
resulting in a iteration with $\bigO({mn^2}+n^3)$ computational complexity. Therefore, computing a solution with an error $\epsilon$ with an interior-point method would have an overall complexity of $\bigO((m+n)n^2\sqrt{m}\log{\frac1\epsilon}))$, though in practice it has been observed that the number of iterations is often very small and independent on $m$. 
Interior point methods have been used to solve (\ref{primal}) in particular in \cite{Kumar03, Zhou05}. Specifically, in \cite{Zhou05} an algorithm that computes a $(1+\epsilon)$-approximation in $\bigO(\frac{mn}\epsilon+\frac1{\epsilon^{4.5}}\log\frac1\epsilon)$ time is presented.

\medskip

In this paper we introduce a dual algorithm for (\ref{primal}) that mechanically is analogous to the Simplex method for Linear Programming. Each (major) iteration of our algorithm starts with a primal solution corresponding to a \emph{dual feasible basic solution}, but then, instead of a line search, our algorithm will perform a sequence of \emph{exact curve searches} until it arrives to a new \emph{dual feasible basic solution} with a better objective function value. We will define the notion of \emph{basis} as well as \emph{dual feasible basic solution} applied to our problem, which we will rename as \emph{support set} and \emph{dual feasible S-pair}, respectively. Mechanically speaking, our algorithm shares similarities with the dual active set algorithm for strictly convex QPs by Goldfarb and Idnani in \cite{Goldfarb83}, and with the work of Dearing and Zeck who proposed a dual simplex method for the minimum enclosing ball of points in \cite{Dearing09}.

\smallskip

Besides providing an exact solution, one advantage that a dual simplex algorithm has over interior point methods is that, after solving the problem, if it suffers small changes (e.g. adding an extra constraint), the dual simplex method will usually require a small number of iterations to calculate the new solution when it starts with the original solution. Another advantage of simplex-type algorithms in general is that they generate \emph{basic} solutions. For instance in the case of the minimum enclosing ball of balls (and in particular of points too), that will be able to tell us which input balls (points) determine the smallest enclosing ball. On the other hand, we do not have any overall polynomial complexity guarantees for the algorithm.%the algorithm can take an exponential number of iterations.

\medskip

The paper is organized as follows. In section \ref{sec:geometric} we show that problem (\ref{primal}) is equivalent to relevant problems in computational geometry involving hyperspheres. Section \ref{sec:prelims} presents important theoretical background to the algorithm, such as duality results and the definition of \emph{support set}. The dual simplex algorithm for (\ref{primal}) is then introduced in section \ref{sec:algorithm}. We then briefly explain the implementation details in section \ref{sec:implementation} and show some computational results in section \ref{sec:results}.

\section{Equivalent geometric problems}\label{sec:geometric} We now show that problem (\ref{primal}) is equivalent to several relevant problems in computational geometry such as the smallest enclosing ball of balls. For what follows, denote by $B(c, r)$ a ball with the Euclidean norm with center at $c\in \RR^{n-1}$ and radius $r\geq 0$. 

\smallskip\noindent\textbf{The smallest enclosing ball problem.} The classical and well studied problem of enclosing a set of Euclidean balls with an Euclidean ball of smallest radius can be reduced to (\ref{primal}) as a consequence of the fact that $B(c_1, r_1)\cap  B(c_2, r_2)\neq\emptyset$ if and only if $\norm{c_2-c_1}\leq r_2+r_1$. Thus, the problem of enclosing a set $\B$ of balls $B(c_i, r_i)$, $i=1,...,m$ with a ball $B(c,r)$ of minimum radius can be solved by (\ref{primal}) considering $\Pp=\{(-r_i; c_i),\, i=1,...,m\}$. The optimal ball will then be given by the center $c=\xb^*$ and radius $r=-x_0^*$. When $r_i=0$ for all $i$, the problem reduces to the smallest enclosing ball of points. Figure \ref{fig:MinEB} illustrates this equivalence: the minimum enclosing ball of set $\B$ is the intersection of the cone $x^*+\Q$ with the plane $x_{0}=0$. 

The smallest enclosing ball of balls, and in particular of points, is a classical problem in computational geometry. %reported to go back to the $19^{th}$ century \cite{Sylvester}.
This problem has been extensively studied, specially from a combinatorial point of view, in particular in the \emph{LP-type} framework, see e.g. \cite{Dyer92, Fischer04, Matousek96,Megiddo89, Welzl91}. In particular, the minimum enclosing ball of points can easily be converted in a Quadratic Program (QP) and solved using off-the-shelf QP solvers. G{\"a}rtner and Sch{\"o}nherr \cite{Gartner00} developed a generalization of the simplex method for QP with the goal of targeting geometric QPs, with one of the main applications being the MB problem, while later, Fischer and G{\"a}rtner \cite{Fischer04} proposed an algorithm with a pivoting scheme resembling the simplex method for LP based on previous ideas from \cite{Hopp96}. Using related ideas, Dearing and Zeck \cite{Dearing09} developed a dual algorithm for the MB problem. This algorithm was further improved in \cite{Cavaleiro18}. Several approximation algorithms have also been developed focusing on finding an $\epsilon$-\emph{core set}, \cite{Badoiu02}, that is a subset of $\Ss\subset\Pp$ that has the property that the smallest ball containing $\Ss$ once expanded by $1+\epsilon$ covers $\Pp$. A surprising fact is the existence of an $\epsilon$-core set of size at most $\lceil\frac1\epsilon\rceil$, independent of the dimension $n$, for any point set $\Pp\subset\RR^n$, \cite{Kumar03, Badoiu03}. Several algorithms focused on finding $\epsilon$-core sets have been proposed \cite{Badoiu03,Badoiu02,Kumar03,Larsson13, Nielsen09,Yildirim08}.

\begin{figure}[h]
	\centering
	\subfigure[\emph{Cone view}]{\includegraphics[height=3.5cm]{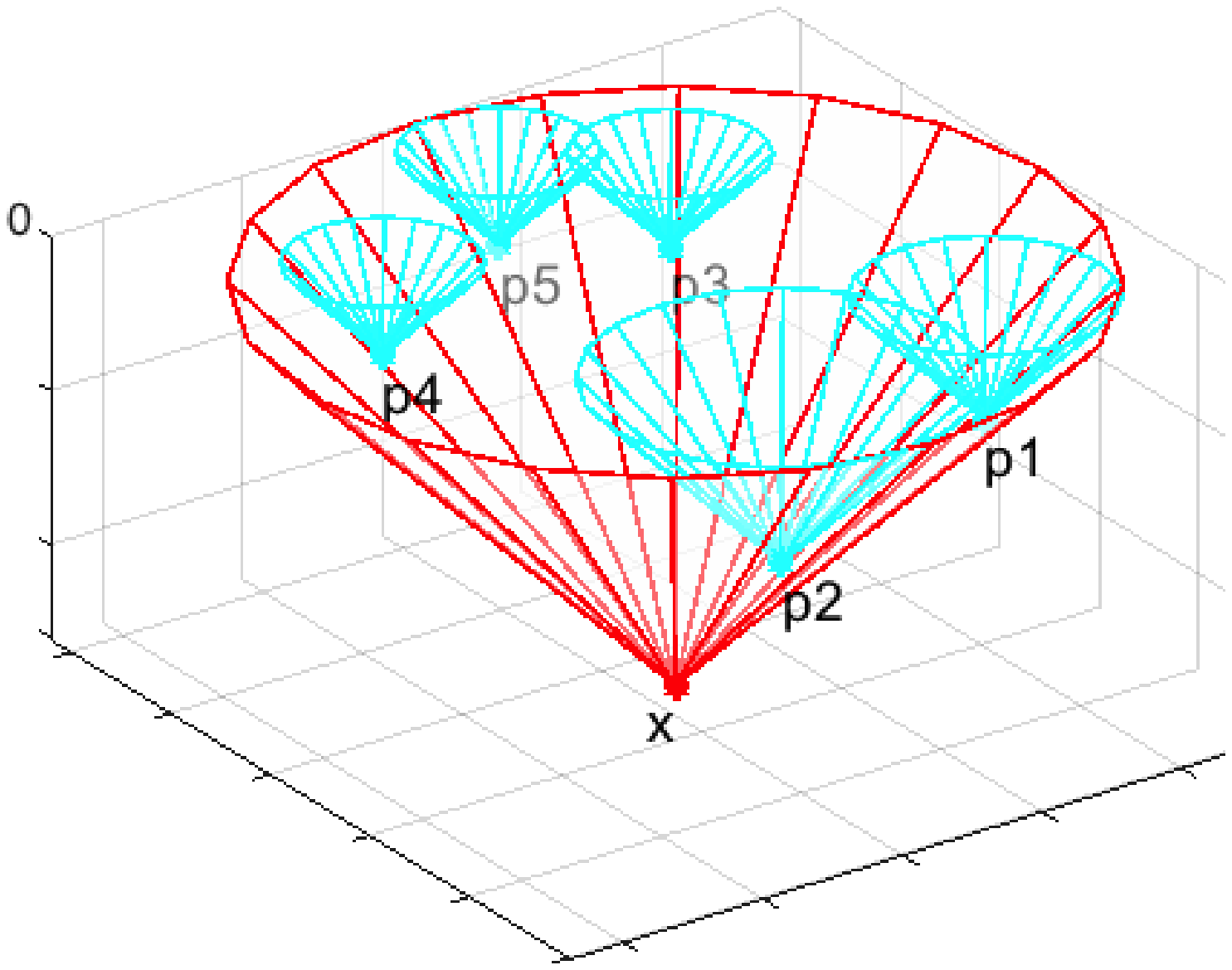}}
	\hspace{2cm}
	\subfigure[\emph{View from above at the cross-section at $x_{0}=0$}]{\includegraphics[height=3.7cm]{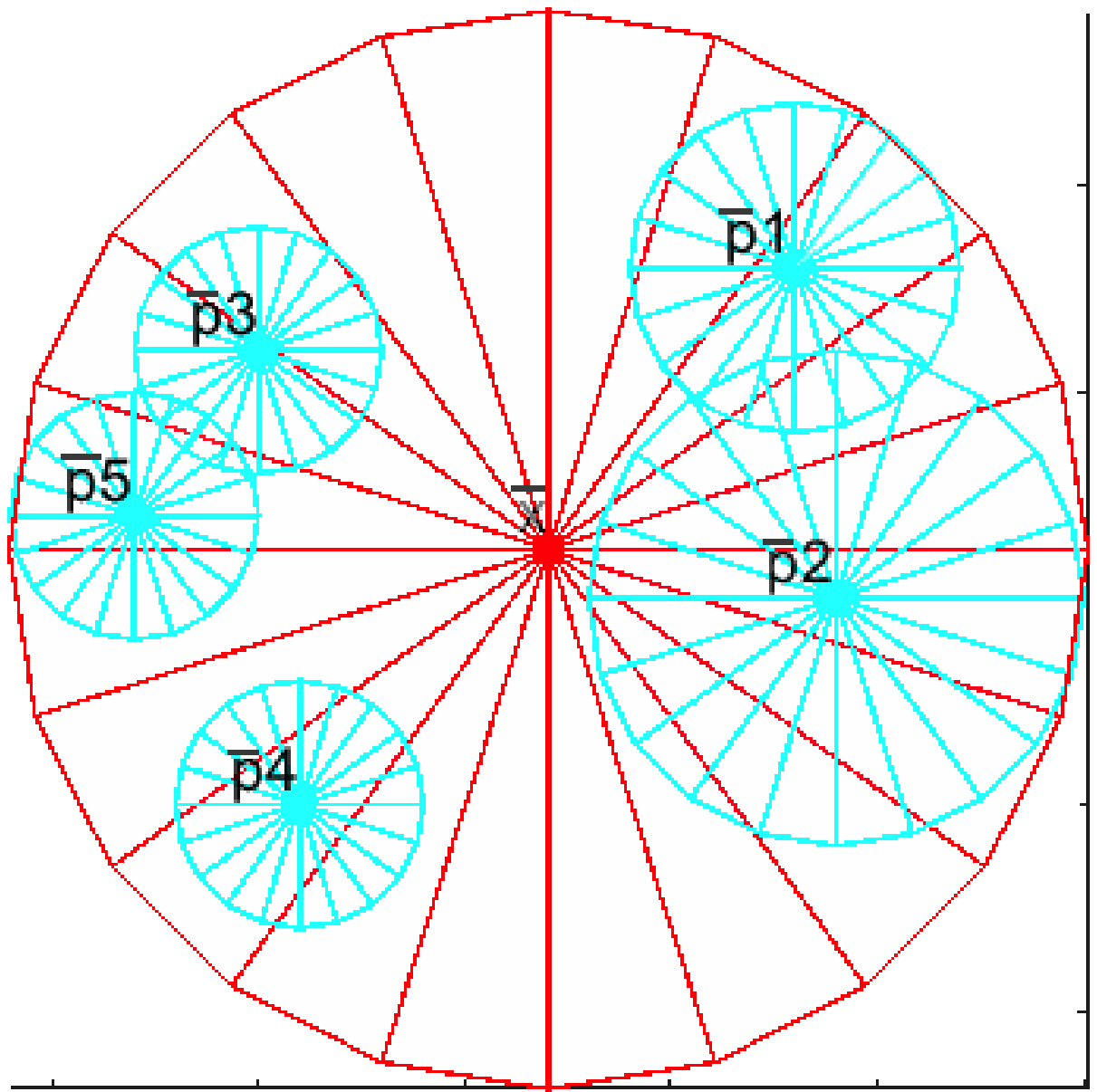}}
	\vspace{-0.25cm}\caption{The smallest enclosing ball of a set of balls.}\label{fig:MinEB}
\end{figure}

\noindent\textbf{The smallest intersecting ball and the largest enclosed ball problems.} Consider now the \emph{smallest intersecting ball problem} of finding the Euclidean ball with smallest radius that intersects all balls $B(c_i, r_i)$, $i=1,...,m$. This problem also reduces to (\ref{primal}) given that $B(c_1, r_1)\cap  B(c_2, r_2)\neq\emptyset$ if and only if $\norm{c_2-c_1}\leq r_2+r_1$. Thus, considering $\Pp=\{(r_i; c_i),\, i=1,...,m\}$ in (\ref{primal}), the smallest ball that intersects all balls has center $c=\xb^*$ and radius $r=-x_0^*$. Figure \ref{fig:MinIB} illustrates the equivalence.
Balls $B(c_i, r_i)$, $i=1,...,m$, may all intersect, and so the smallest intersecting ball could be considered any point in the intersection. In such case, the smallest intersecting ball problem then looses its interest. In fact, when all balls intersect, the solution is such that $x_0^*>0$. This solution however is not meaningless, in fact, it is not difficult to see that, when $\bigcap B(c_i, r_i)\neq\emptyset$, ball $B(\xb,x_0^*)$ is the largest radius ball that is enclosed in the intersection of the balls. We shall refer to this problem as the \emph{largest enclosed ball problem} (see Figure \ref{fig:MaxEB}). For previous work on this problem see \cite{Morduk13, Nam12} and the references therein.
%Applications: \cite{Kalantari15}}

\begin{figure}[h]
	\centering
	\subfigure[\emph{Cone view}]{\includegraphics[height=3.5cm]{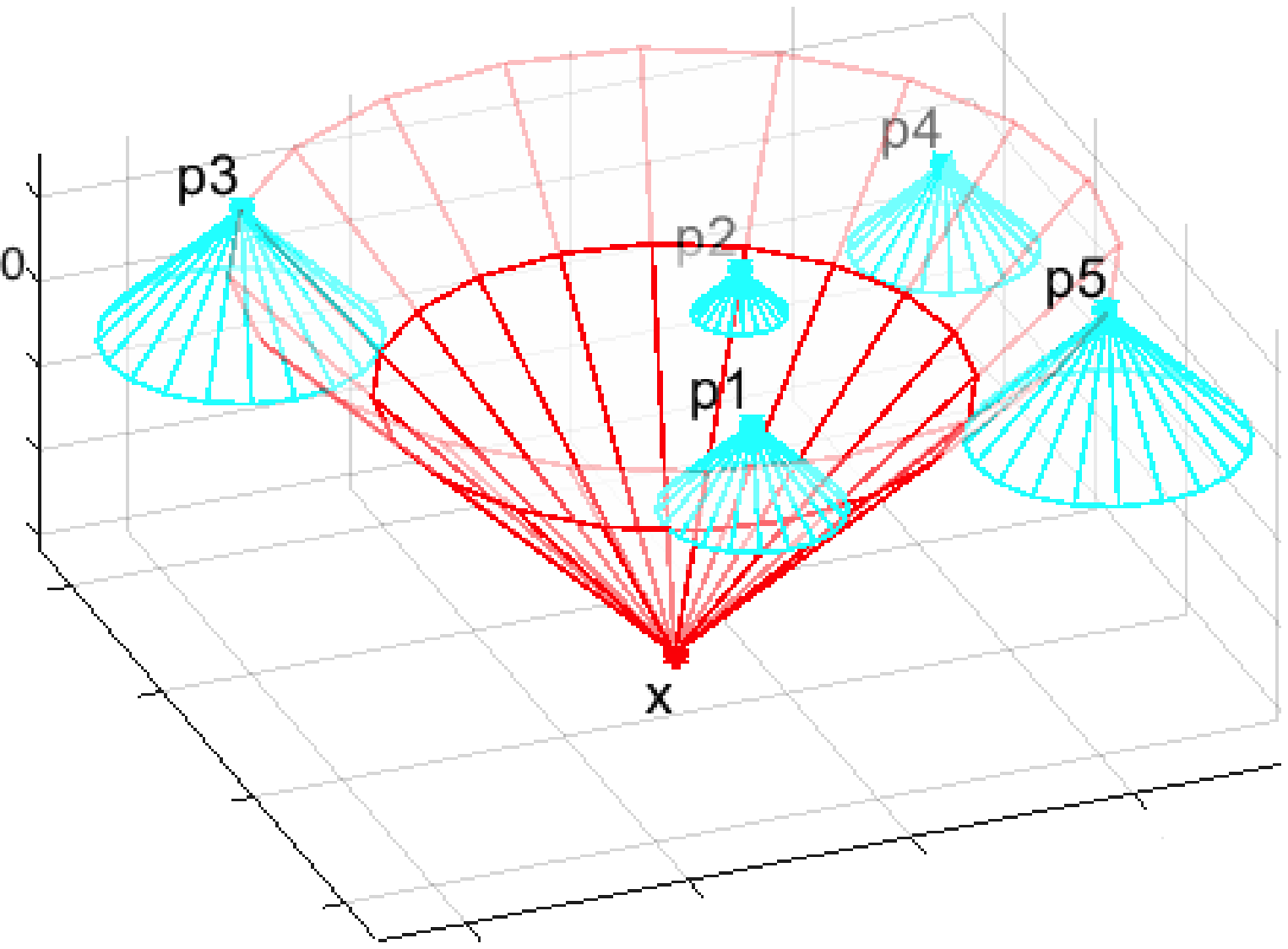}}
	\hspace{2cm}
	\subfigure[\emph{View from above at the cross-section at $x_{0}=0$}]{\includegraphics[height=3.5cm]{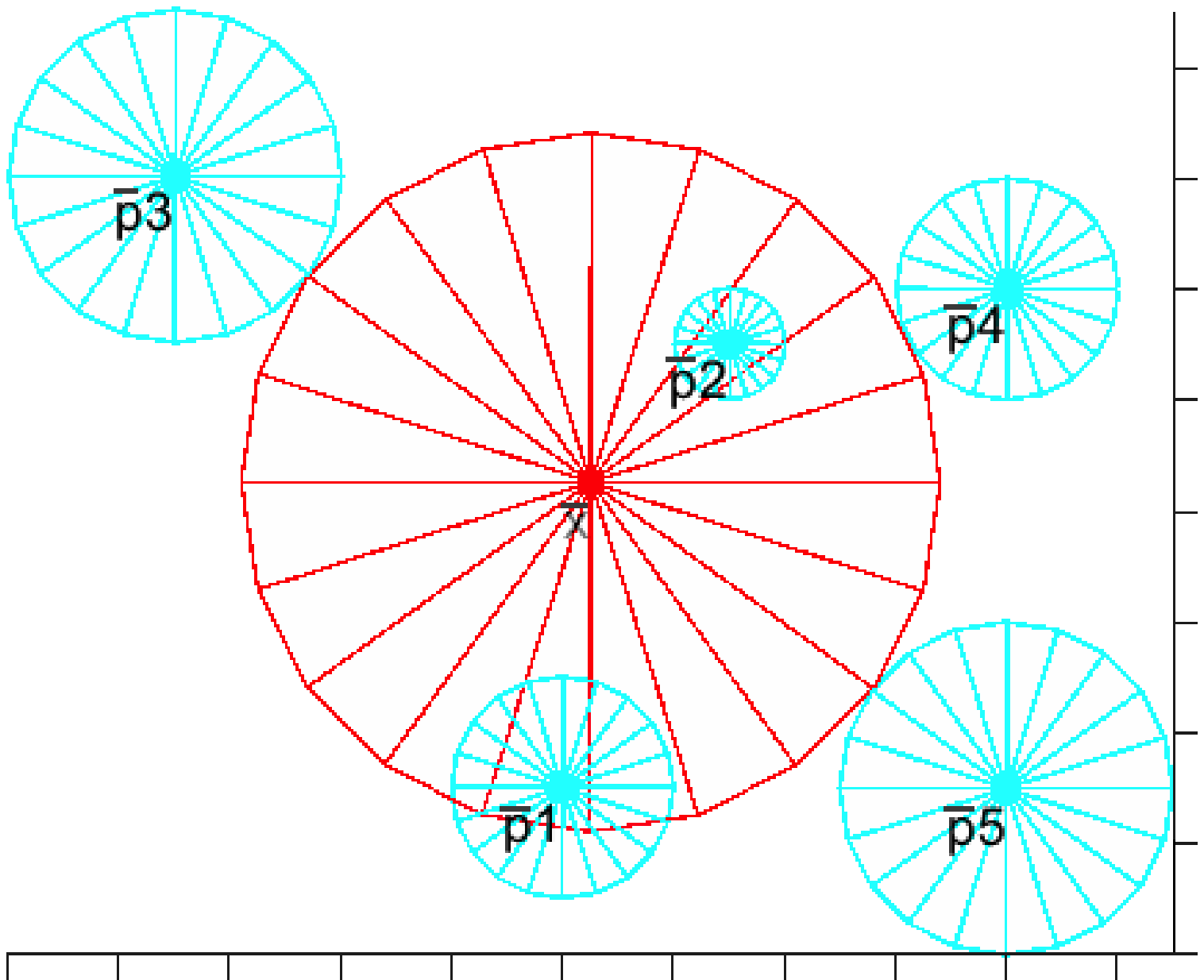}}
	\vspace{-0.25cm}
	\caption{The smallest intersecting ball of a set of balls.}\label{fig:MinIB}
\end{figure}

\begin{figure}[h]
	\centering
	\subfigure[\emph{Cone view}]{\includegraphics[height=3.5cm]{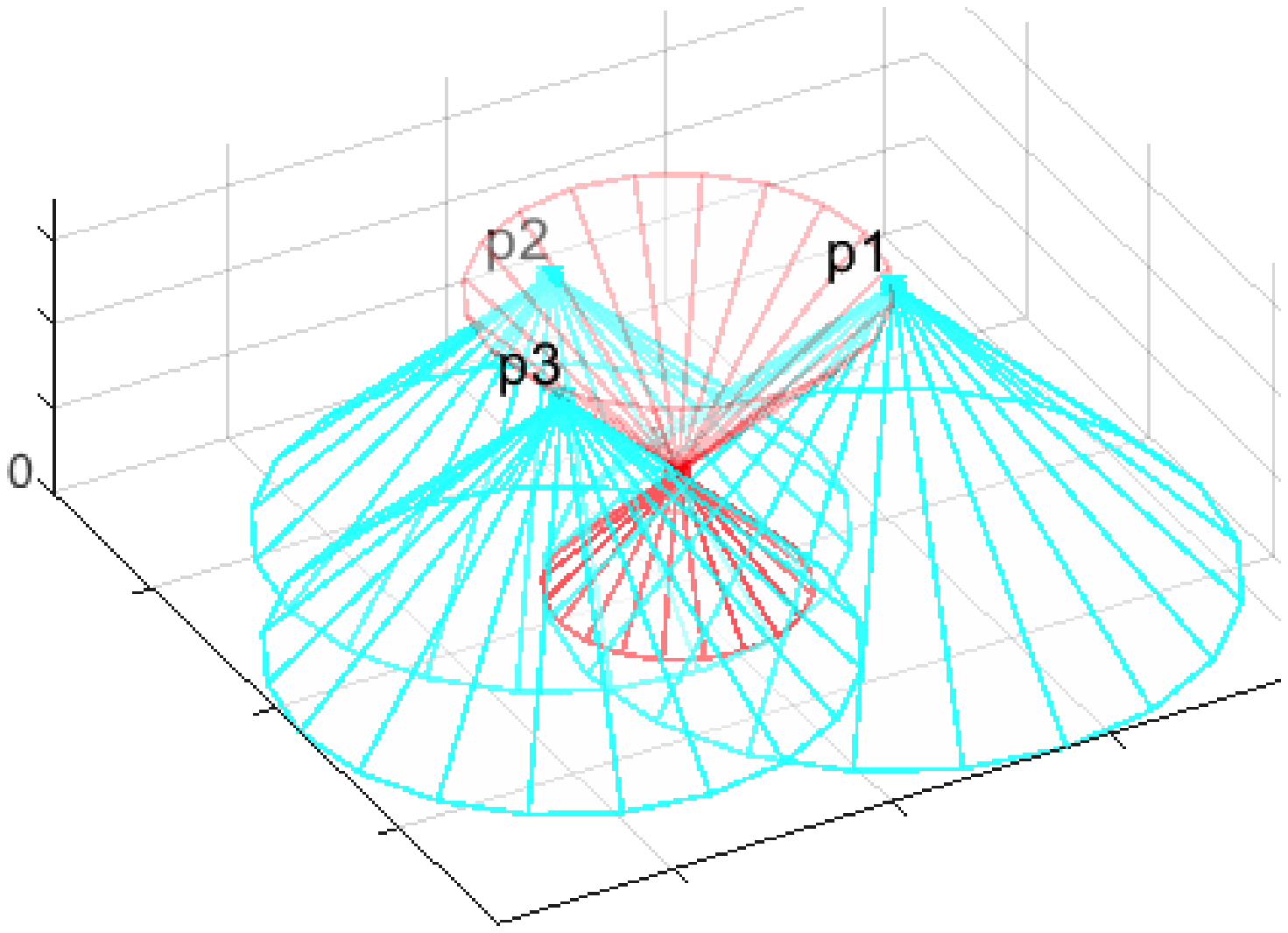}}
	\hspace{2cm}
	\subfigure[\emph{View from below at the cross-section at $x_{0}=0$}]{\includegraphics[height=3.5cm]{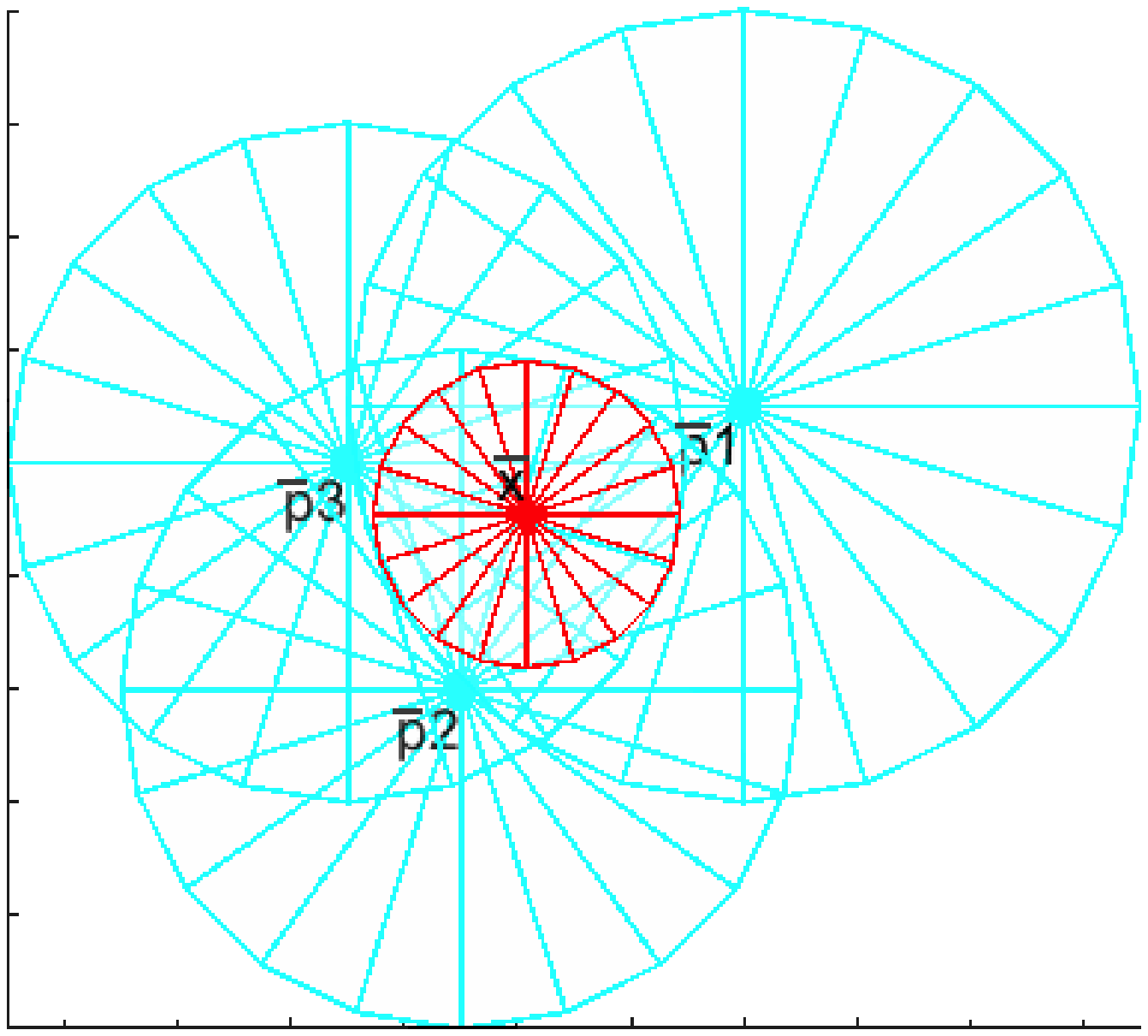}}
	\vspace{-0.25cm}\caption{The largest enclosed ball in a set of balls.}\label{fig:MaxEB}
\end{figure}
	
\noindent\textbf{The smallest intersecting and enclosed ball problem.} From the previous two sections, we conclude that the problem of finding a ball with smallest radius that simultaneously encloses balls $B(c_i, r_i)$, $i=1,...,m_1$, and intersects balls $B(c_j, r_j)$, $j=1,...,m_2$, can be solved by considering $\Pp = \{(-r_i; c_i),\, i=1,...,m_1\}\cup\{(r_j; c_j),\, j=1,...,m_2\}$. The optimal ball will then have center $\xb^*$ and radius $-x_0^*$. 
	%Can balls $B(c_j, r_j)$, $j=1,...,m_2$ all intersect each other? Yes, that's no problem!
	%Previous work on this problem include \cite{Nam14}.

\section{Preliminaries}\label{sec:prelims} Before we proceed, let us introduce/review some notation.
\begin{center}
	\begin{tabular}{l|p{12cm}}
		$\Pp$ & $\{p_1,...,p_m\}$ \\
		$p_i=(\pio;\pbi)$ & a point from $\Pp$\\
		$\overline{\Pp}$& set $\{\pb_1,...,\pb_m\}$   \\	
		$x=(x_0;\xb)$ & primal variables\\
		$y_i=(y_{i0};\yb_i)$ & dual variables, $i=1,...,m$
	\end{tabular}
\end{center}

%------------------------------------------------------------------------------------------------------------------------
\subsection{Duality and optimality conditions}

It is easy to see that the solution to (\ref{primal}) always exists and that it is unique. Moreover, it is possible to prove the following:

\begin{lemma}\label{lem:onePointSol}
	The solution to $\InfQP$ is $p_k\in\Pp$, for some $k=1,...,m$, if and only if $p_k\leqQ p_i$ for all $i=1,...,m$ and $p_{k0}=\min\{p_{i0},\, i=1,...,m\}$.
\end{lemma}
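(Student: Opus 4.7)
The plan is to prove both implications by exploiting the single observation that any feasible $x$ must satisfy $x_0 \leq p_{i0}$ for every $i$, which follows directly from the structure of $\Q$: if $x \leqQ p_i$, then $p_i - x \in \Q$ means $\norm{\pbi - \xb}_2 \leq \pio - x_0$, and since the left-hand side is nonnegative we get $x_0 \leq \pio$. Consequently, for the unique optimizer $x^*$ we have
\[
x_0^* \;\leq\; \min_{i=1,\ldots,m} p_{i0}.
\]

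For the easy direction ($\Leftarrow$), I would assume $p_k \leqQ p_i$ for all $i$ and $p_{k0}=\min_i p_{i0}$. The first condition says exactly that $x:=p_k$ is feasible for (\ref{primal}), and its objective value is $p_{k0}$. Combined with the upper bound above, this value matches $\min_i p_{i0}$, so $p_k$ achieves the maximum and must equal $x^*$ by uniqueness of the optimum.

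For the other direction ($\Rightarrow$), I would assume $x^* = p_k$. Feasibility of $x^*$ gives $p_k \leqQ p_i$ for every $i$, which is the first condition. For the second, the bound $x_0^* \leq \min_i p_{i0}$ yields $p_{k0} \leq p_{i0}$ for all $i$; since $p_{k0}$ itself appears in the set $\{p_{i0}\}$, this forces $p_{k0} = \min_i p_{i0}$.

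The argument is almost entirely an unpacking of the cone inequality, so there is no real obstacle; the only thing to be careful about is to cite the existence/uniqueness of $x^*$ (stated just before the lemma) when concluding the $\Leftarrow$ direction, and to note in the $\Rightarrow$ direction that the minimum is attained at index $k$ because $p_{k0}$ is itself one of the values being minimized over.
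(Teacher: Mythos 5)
The paper states this lemma without proof, so there is no author argument to compare against. Your argument is correct and self-contained: the key observation that every feasible $x$ satisfies $x_0 \leq p_{i0}$ (from $\norm{\pb_i - \xb} \leq p_{i0} - x_0 \geq 0$) gives the universal upper bound on the objective, after which both directions follow by routine bookkeeping plus the uniqueness of the optimizer asserted just before the lemma.

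One small remark worth noting, though it is not a flaw: the condition $p_{k0} = \min_i p_{i0}$ in the lemma is actually implied by the condition $p_k \leqQ p_i$ for all $i$. Indeed, $p_k \leqQ p_i$ forces $\norm{\pb_i - \pb_k} \leq p_{i0} - p_{k0}$, whose right-hand side must be nonnegative, so $p_{k0} \leq p_{i0}$ for every $i$; since $p_{k0}$ is itself one of the values, equality with the minimum follows. Thus the ``$\Leftarrow$'' direction does not really need the second hypothesis, and the ``$\Rightarrow$'' direction establishes it automatically once feasibility is unpacked. Your proof is consistent with this — you derive the second condition in ``$\Rightarrow$'' exactly this way — but you invoke it as an independent hypothesis in ``$\Leftarrow$'' where it could have been deduced. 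This is harmless, just slightly less economical than it could be.
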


The dual problem of $\InfQP$ is
\begin{equation} \begin{array}{cl}\label{dual}
	\displaystyle\min_y & \displaystyle\sum_{i=1}^m \inner{p_i}{ y_i}\\
	 \st &\displaystyle\sum_{i=1}^m y_{i0} =1\\
	 &  \displaystyle\sum_{i=1}^m \yb_i = 0\\
	& y_i\geqQ 0, \quad i=1,..., m.\tag{D}
\end{array}\end{equation}	

\noindent The solution to (\ref{dual}) may not be unique.

\begin{lemma}\label{lem:strictFeas}
	Both primal problem (\ref{primal}) and dual problem (\ref{dual}) are strictly feasible; i.e. there exists a primal-feasible vector $x$ such that $x\lQ p_i$ for all $i=1,...,m$, and there exist dual-feasible $y_1,...,y_m$ such that $y_i\gQ 0$ for all $i=1,...,m$.
\end{lemma}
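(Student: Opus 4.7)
The plan is just to exhibit an explicit strictly feasible point for each problem; there is no real obstacle, since both feasible sets contain interior directions that are easy to write down in closed form.

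For the primal, I want to find $x=(x_0;\xb)$ with $p_i-x\in\text{int}(\Q)$ for every $i$, i.e.\ $p_{i0}-x_0>\norm{\pbi-\xb}_2$ for all $i$. The natural simplification is to set $\xb=0$, which reduces the condition to
\[
x_0<p_{i0}-\norm{\pbi}_2,\quad i=1,\ldots,m.
\]
Since this is a finite system of strict inequalities in one scalar variable, I can simply pick
\[
x_0<\min_{i=1,\ldots,m}\bigl(p_{i0}-\norm{\pbi}_2\bigr).
\]
Then $x=(x_0;0)$ satisfies $x\lQ p_i$ for every $i$, which is primal strict feasibility.

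For the dual, I propose the uniform solution $y_i=(1/m;\,0)\in\RR^n$ for $i=1,\ldots,m$. Checking the three constraints: $\sum_{i=1}^m y_{i0}=m\cdot(1/m)=1$, $\sum_{i=1}^m \yb_i=0$, and $y_{i0}=1/m>0=\norm{\yb_i}_2$, so each $y_i$ lies in the interior of $\Q$. Hence the $y_i$ form a strictly feasible dual solution, completing the proof. Both constructions are entirely independent of the particular data $\{p_1,\ldots,p_m\}$, which is why no further analysis is needed.
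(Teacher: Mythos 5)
Your construction is correct: setting $\xb=0$ and $x_0<\min_i\bigl(p_{i0}-\norm{\pbi}_2\bigr)$ gives $p_i-x\in\mathrm{int}(\Q)$ for every $i$, and $y_i=(1/m;\,0)$ satisfies all dual equalities while lying in $\mathrm{int}(\Q)$. The paper states this lemma without proof, and your argument is exactly the natural explicit-witness argument one would supply; nothing is missing.
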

	
\noindent Since both primal and dual problems are strictly feasible, the duality gap is zero, that is, strong duality holds %(Theorem \ref{theo:strongDual}) 
and the Karush-Kuhn-Tucker conditions are also sufficient \cite[p.~25]{Alizadeh03}, as Thoerem \ref{theo:optCond} states.

%\begin{theorem}[Strong duality] \label{theo:strongDual}
%	Both (\ref{primal}) and (\ref{dual}) problems have optimal solutions, $x^*$ and $y_1^*,...,y_m^*$,  respectively, and \[\inner{e_1}{x^*} =\sum_{i=1}^m \inner{p_i}{ y_i^*}.\]
%\end{theorem}	
	
\begin{theorem}[Optimality conditions]\label{theo:optCond}
	Let $x^*$ and $y_i^*$, $i=1,...,m$ be any points in $\RR^{n}$. The pair $(x^*, \{y_i^*\}_{i=1,...,m})$ is primal-dual optimal if and only if
	\begin{itemize}
		\item {primal feasibility:}  $x^*\leqQ p_i, \, i=1,...,m$;
		\item{dual feasibility:} $\sum_{i=1}^m y_i^* =e_1$ and $y_i^*\geqQ 0, \, i=1,..., m$;
		\item{complementary slackness:} $\inner{p_i-x^*}{y_i^*}=0,\, i=1,..., m$.
	\end{itemize}
\end{theorem}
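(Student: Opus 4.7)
The plan is to derive both directions from standard SOCP duality, using Lemma~\ref{lem:strictFeas} to justify strong duality. The one algebraic identity driving the whole argument is: whenever $\sum_i y_i^* = e_1$,
\[
\inner{e_1}{x^*} - \sum_{i=1}^m \inner{p_i}{y_i^*} \;=\; -\sum_{i=1}^m \inner{p_i - x^*}{y_i^*},
\]
obtained just by expanding $\sum_i \inner{x^*}{y_i^*} = \inner{x^*}{\sum_i y_i^*} = \inner{x^*}{e_1}$. So the duality gap is exactly the negative of a sum whose summands are the complementary slackness quantities.

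For the ``only if'' direction, primal and dual feasibility follow immediately from the fact that an optimal pair must be feasible. For complementary slackness, I would use that strict feasibility of both (\ref{primal}) and (\ref{dual}) (Lemma~\ref{lem:strictFeas}) forces zero duality gap (cf.\ \cite{Alizadeh03}), so $\inner{e_1}{x^*} = \sum_i \inner{p_i}{y_i^*}$. Combined with the displayed identity, this yields $\sum_{i=1}^m \inner{p_i - x^*}{y_i^*} = 0$. Since $\Q$ is self-dual and both $p_i - x^* \in \Q$ (from primal feasibility) and $y_i^* \in \Q$ (from dual feasibility), each inner product $\inner{p_i-x^*}{y_i^*}$ is nonnegative, so a vanishing sum forces each term to vanish.

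For the ``if'' direction I would run the identity in reverse. Given primal feasibility of $x^*$, dual feasibility of $\{y_i^*\}$, and the complementary slackness equations, the right-hand side of the identity is $0$ term-by-term, hence $\inner{e_1}{x^*} = \sum_i \inner{p_i}{y_i^*}$. Weak duality then promotes this coincidence of primal and dual objective values to simultaneous optimality of $x^*$ for (\ref{primal}) and of $\{y_i^*\}$ for (\ref{dual}): any primal-feasible $x$ has objective bounded by the dual value $\sum_i\inner{p_i}{y_i^*} = \inner{e_1}{x^*}$, and symmetrically for the dual.

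The only nontrivial ingredient beyond the hypotheses already in the excerpt is the self-duality of $\Q$, needed in the ``only if'' step to pass from a vanishing sum of inner products to the vanishing of each summand. This is classical for the second-order cone, so I do not expect any real obstacle; the proof is essentially bookkeeping on top of Lemma~\ref{lem:strictFeas} and standard SOCP weak/strong duality.
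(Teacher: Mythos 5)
Your proof is correct. The paper itself does not write out a proof of this theorem — it simply observes that strict feasibility of both problems (Lemma~\ref{lem:strictFeas}) gives zero duality gap and makes the KKT conditions sufficient, citing \cite[p.~25]{Alizadeh03} — and your argument is exactly the standard one behind that citation, filled in via the weak-duality identity, Slater's condition for strong duality, and self-duality of~$\Q$.
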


\noindent The complementary slackness conditions imply
\begin{itemize}
	\item if $y_i^*\gQ 0$ then $x^*=p_i$ (which can happen for a single $i$, and in that case $y_i^*= e_1$ and $y_j^*=0$ for all $j\neq i$);
	\item if $x^*\gQ p_i$ then $y_i^*=0$;
	\item if $p_i-x^*\in\partial\Q$ and $y_i^*\in\partial\Q$ then 
		\begin{equation}\label{yi} 
		\yb_i^*=\frac{y_{i0}^*}{p_{i0}-x_0^*}\left(\xb^*-\pb_i\right)={y_{i0}^*}\frac{\xb^*-\pb_i}{\norm{\pb_i-\xb^*}}.
		\end{equation}
\end{itemize}

\noindent A consequence of Theorem \ref{theo:optCond} is the following characterization of optimality: 
\begin{theorem}\label{theo:convhull}
	$x^*$ is the optimal solution to problem (\ref{primal}) if and only if $x^*\leqQ p_i$ for all $i=1,...,m$, and 
	\begin{equation}\label{convhull}
		\xb^*\in\conv\left(\{\pb_i:\,  \norm{\pb_i-\xb^*}=p_{i0}-x_0^* \} \right).
	\end{equation}
\end{theorem}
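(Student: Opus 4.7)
The plan is to derive both implications directly from the KKT conditions of Theorem~\ref{theo:optCond}, using the explicit formula (\ref{yi}) to relate the dual multipliers to the convex combination coefficients. Let $I(x):=\{i:\,\|\pb_i-\xb\|=p_{i0}-x_0\}$ denote the set of ``active'' indices at a feasible $x$, i.e.\ those for which $p_i-x\in\partial\Q$.

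For the forward direction, assume $x^*$ is optimal and let $y_1^*,\dots,y_m^*$ be dual multipliers satisfying Theorem~\ref{theo:optCond}. First I would observe that for $i\notin I(x^*)$ we have $p_i-x^*\in\mathrm{int}(\Q)$, and since $\Q$ is self-dual, $\langle p_i-x^*,y_i^*\rangle=0$ together with $y_i^*\in\Q$ forces $y_i^*=0$. Consequently all the dual ``mass'' is carried by indices in $I(x^*)$, and in particular $\sum_{i\in I(x^*)}y_{i0}^*=1$. For $i\in I(x^*)$ with $p_i\neq x^*$ I would invoke formula (\ref{yi}) to write $\yb_i^*=y_{i0}^*(\xb^*-\pb_i)/(p_{i0}-x_0^*)$. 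Setting $\alpha_i:=y_{i0}^*/(p_{i0}-x_0^*)\geq 0$, the condition $\sum\yb_i^*=0$ becomes $\sum_{i\in I(x^*)}\alpha_i(\xb^*-\pb_i)=0$, i.e.\ $\xb^*=\sum_{i\in I(x^*)}(\alpha_i/\sum_j\alpha_j)\,\pb_i$, which is the desired convex combination (note $\sum\alpha_j>0$ since at least one $y_{i0}^*>0$ among active $i$ with $p_{i0}-x_0^*>0$). The degenerate subcase $x^*=p_k$ for some $k$ is handled separately: then $\pb_k=\xb^*\in I(x^*)$ trivially, so $\xb^*$ is in the required convex hull.

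For the converse, suppose $x^*\leqQ p_i$ for all $i$ and $\xb^*=\sum_{i\in I(x^*)}\beta_i\pb_i$ with $\beta_i\geq 0$, $\sum\beta_i=1$. I would construct dual multipliers satisfying Theorem~\ref{theo:optCond} explicitly. Let $Z:=\sum_{i\in I(x^*)}\beta_i(p_{i0}-x_0^*)$. If $Z>0$, define
\[
y_{i0}^*:=\frac{\beta_i(p_{i0}-x_0^*)}{Z},\qquad \yb_i^*:=\frac{\beta_i(\xb^*-\pb_i)}{Z}\quad(i\in I(x^*)),
\]
and $y_i^*:=0$ otherwise. A short check shows $\|\yb_i^*\|=y_{i0}^*$ (so $y_i^*\in\partial\Q$), $\sum y_{i0}^*=1$, $\sum\yb_i^*=(1/Z)(\xb^*-\sum\beta_i\pb_i)=0$, and the complementary slackness inner product collapses to $(p_{i0}-x_0^*)y_{i0}^*-\beta_i(p_{i0}-x_0^*)^2/Z=0$. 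The remaining case $Z=0$ forces $\pb_i=\xb^*$ and $p_{i0}=x_0^*$ for every $i\in I(x^*)$ with $\beta_i>0$, so $x^*=p_k$ for some $k$; setting $y_k^*=e_1$ and $y_i^*=0$ otherwise fulfills the KKT conditions. Appealing to Theorem~\ref{theo:optCond} then yields optimality of $x^*$.

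The main obstacle is bookkeeping the degenerate case $p_i=x^*$, where $p_{i0}-x_0^*=0$ so formula (\ref{yi}) is indeterminate and the normalizer $Z$ may vanish; isolating this case (which by Lemma~\ref{lem:onePointSol} corresponds to the solution being one of the input points) and handling it with the trivial multiplier $y_k^*=e_1$ is the only slightly delicate point. Everything else reduces to reading off the convex combination from $\sum\yb_i^*=0$ and reversing the construction.
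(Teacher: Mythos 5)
Your proof is correct and takes essentially the same route as the paper: isolate the degenerate case $x^*=p_k$, then read the convex combination coefficients off $\sum\yb_i^*=0$ via formula (\ref{yi}) for the forward direction, and build the dual multipliers explicitly from $\beta_i$ for the converse. The only stylistic difference is that you write down the dual multipliers directly and verify the KKT conditions, whereas the paper first poses the relation (\ref{alpi}) and solves the resulting linear system to arrive at the same formula $y_{i0}=\beta_i(p_{i0}-x_0^*)/Z$; your version is a bit shorter but the content is identical.
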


\begin{proof}The theorem follows trivially from the optimality conditions in the case when $x^*=p_k$ for some $p_k\in\Pp$. Consider then, that is not the case.
	
		First, consider $x^*$ is optimal. Let $\I=\{i:\,\norm{\pb_i-\xb^*}=p_{i0}-x_0^*\}$, and $y_i^*$, $i=1,...,m$ be an optimal dual solution. From complementary slackness and dual feasibility we conclude 
		\[0=\sum_{i\in\I} \yb_i^* = \sum_{i\in\I} \frac{y_{i0}^*}{p_{i0}-x_0^*}\left(\xb^*-\pb_i\right)\]
		\noindent since $y_i^*=0$ for $i\not\in\I$. From the previous equation we have
		\[\xb^*= \sum_{i\in\I} \alpha_i\pb_i\quad\text{with}\quad \alpha_i =\frac{{y_{i0}^*}/{p_{i0}-x_0^*}}{\sum_{i\in\I} {y_{i0}^*}/{p_{i0}-x_0^*}}.\]
		\noindent Since $\sum_i y_{i0}^*=1$ there must be at least one $\alpha_i\neq 0$, and $y_i^*\geqQ 0$ implies $\alpha_i\geq 0$ for all $i$. Because $\sum_{i=1}^m  \alpha_i =1$ we conclude that (\ref{convhull}) holds.
		
		\smallskip
	 
	 Conversely, suppose we have $x^*$ that is primal feasible and such that $\xb^*$ satisfies (\ref{convhull}). Then 
	 \[\xb^*= \sum_{i\in \I} \alpha_i\pb_i,\quad\text{with}\quad\alpha_i\geq 0,\,\,i\in\I,\quad \text{and}\quad\sum_{i\in \I} \alpha_i =1.\]
	 
	 \noindent We will now build a dual solution that is feasible and together with $x^*$ satisfy complementary slackness. Consider $y_{i0}$, $i=1,...,m$, such that 
	 \begin{equation}\label{alpi}
	 \alpha_i =\frac{{y_{i0}}/{p_{i0}-x_0^*}}{\sum_j {y_{i0}}/{p_{i0}-x_0^*}},
	 \end{equation}
	 
	 \noindent Equations (\ref{alpi}) together with $\sum_i y_{i0}=1$ give the following linear systems of equations
	 \begin{equation*} \left\{ \begin{array}{l}
	 \alpha_1(z_1+\hdots + z_m) = z_1\\
	 \vdots\\
	 \alpha_m(z_1+\hdots + z_m) = z_m\\
	 \sigma_1 z_1 + \hdots + \sigma_m z_m = 1
	 \end{array}\right.\end{equation*}	 
	 
	 \noindent with $z_i = y_{i0}/(p_{i0}-x_0^*)$ and $\sigma_j=p_{i0}-x_0^*$. The last equation implies
	 \[\sigma_1 \alpha_1(z_1+\hdots + z_m) + \hdots + \sigma_m \alpha_m(z_1+\hdots + z_m)= 1\]
	 \[ (z_1+\hdots + z_m)=\frac1{\sigma_1 \alpha_1 + \hdots + \sigma_m \alpha_m}.\]
	 
	 \noindent Therefore
	 \[z_i = \frac{\alpha_i}{\sigma_1 \alpha_1 + \hdots + \sigma_m \alpha_m},\]
	 \noindent that is,
	 \[y_{i0} = \frac{\alpha_i (p_{i0}-x_0^*)}{\sum_j \alpha_j (p_{j0}-x_0^*)}.\] 
	 
	 \noindent We have that $y_{i0}\geq 0$ for $i\in\I$. Setting $y_{i0}=0$ for $i\not\in\I$, and considering
	 \[\yb_i=\frac{y_{i0}}{p_{i0}-x_0^*}\left(\xb-\pb_i\right),\]
	 \noindent for all $i=1,...,m$, we have that $(y_{i0};\, \yb_i)$ is feasible for the dual problem (\ref{dual}). Since $x^*$ is primal feasible and $\norm{\pb_i-\xb^*}=p_{i0}-x_0^*$ for $i\in\I$, we have that, together with $y_i$, complementary slackness is satisfied, thus $x^*$ is optimal for~(\ref{primal}).
\end{proof}

We now present two insightful conclusions from the proof of Theorem \ref{theo:convhull}.

\begin{observation} 
	The optimal solution $x^*$ to (\ref{primal}) is such that 
	\begin{equation}\label{riconv}
	\xb = \ri \conv (\{\pb_i:\, y^*_{i0}> 0\}).
	\end{equation}
	\noindent Note that $y^*_{i0}>0$ cannot be replaced by $y^*_i\in\partial \Q$ (when $\xb^*=\pb_k$ we have $y_k^*=e_1\not\in\partial\Q$).
\end{observation}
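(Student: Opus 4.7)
The plan is to refine the convex combination of $\xb^*$ constructed inside the proof of Theorem \ref{theo:convhull} by pinning down exactly which coefficients are strictly positive, and then to invoke the standard characterization that the relative interior of the convex hull of a finite point set is the set of strict convex combinations of all of its points.

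I would split into two cases. If $\xb^*=\pb_k$ for some $k$, then by Lemma \ref{lem:onePointSol} together with the first bullet following Theorem \ref{theo:optCond}, $y_k^*=e_1$ and $y_j^*=0$ for every $j\neq k$, so $\{i:\,y_{i0}^*>0\}=\{k\}$ and $\xb^*=\pb_k\in\ri\conv(\{\pb_k\})$ trivially.

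In the complementary case $\xb^*\neq\pb_i$ for all $i$, I would reuse the formula derived in the first half of the proof of Theorem \ref{theo:convhull}:
\[ \xb^* \;=\; \sum_{i\in \I}\alpha_i\,\pb_i, \qquad \alpha_i \;=\; \frac{y_{i0}^*/(p_{i0}-x_0^*)}{\sum_{j\in \I} y_{j0}^*/(p_{j0}-x_0^*)}, \]
where $\I=\{i:\,\norm{\pb_i-\xb^*}=p_{i0}-x_0^*\}$ and $p_{i0}-x_0^*>0$ for every $i\in\I$. Hence $\alpha_i>0$ if and only if $y_{i0}^*>0$. The complementary slackness implication ``$x^*\lQ p_i\Rightarrow y_i^*=0$'' guarantees that every $i$ with $y_{i0}^*>0$ already lies in $\I$, so $\xb^*$ is actually a strict convex combination (all coefficients strictly positive, summing to one) of the points $\{\pb_i:\,y_{i0}^*>0\}$. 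Applying the standard fact that $\ri\conv(S)$ for a finite $S$ equals the set of strict convex combinations of all points of $S$ yields (\ref{riconv}).

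The only delicate part is the edge case $\xb^*=\pb_k$: there $y_k^*=e_1$ sits in the interior of $\Q$, not on $\partial\Q$, which is exactly why the condition $y_{i0}^*>0$ in (\ref{riconv}) cannot be weakened to $y_i^*\in\partial\Q$ -- under the weaker condition the sole relevant point $\pb_k$ would be dropped from the convex hull and the statement would fail.
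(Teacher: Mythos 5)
Your plan is sound and matches what the paper intends by calling this a conclusion ``from the proof of Theorem \ref{theo:convhull}'': reuse the constructed convex combination, observe $\alpha_i>0\iff y^*_{i0}>0$, and invoke the standard fact that the relative interior of a finitely generated convex hull is exactly the set of strict convex combinations of its generators. The second case and the final identification are correct.

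One small slip: your case split is on $\xb^*=\pb_k$, but Lemma \ref{lem:onePointSol} (and the first bullet after Theorem \ref{theo:optCond}) is a statement about the full vector $x^*=p_k$. These are not the same. It can happen that $\xb^*=\pb_k$ while $x_0^*<p_{k0}$; for example $\Pp=\{(1;0),(0;-1),(0;1)\}\subset\RR^2$ has $x^*=(-1;0)$, so $\xb^*=\pb_1=0$ but $x^*\neq p_1$, and indeed $y_1^*=0$. Your case~1 argument then does not apply (the lemma's hypothesis fails), and your case~2 explicitly excludes this situation because it assumes $\xb^*\neq\pb_i$ for all $i$. The fix is to split on $x^*=p_k$ versus $x^*\neq p_i$ for all $i$, exactly as the proof of Theorem \ref{theo:convhull} does; then in the second case one has $p_{i0}-x_0^*>0$ for every $i\in\I$ (a point $p_i\neq x^*$ in $\I$ necessarily satisfies this), and your argument goes through unchanged. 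The paper's own parenthetical commits the same $\xb^*$ versus $x^*$ conflation, so this is a shared imprecision rather than a conceptual error, but it is worth being careful about, especially since it is precisely the boundary between the two cases.
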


\medskip

\noindent Another conclusion from Theorem \ref{theo:convhull}, which will be at the core of our algorithm, is the following corollary. 
	
\begin{corollary} \label{corol:affhull}
Consider $x$, not necessarily primal feasible, that satisfies
	\begin{equation}\label{eq:observ}
	\xb\in\aff(\{\pb_i:\, i\in\I \}),\quad \text{for}\,\, \I =  \{i:\,\norm{\pb_i-\xb}=p_{i0}-x_0\}.
	\end{equation}
	\noindent Let $\alpha_1,...,\alpha_m$ be the coefficients of the affine combination. There exists a dual solution given by
	\[y_i=0,\, \text{for }\, i\not\in\I,\]
	\[y_{i0} = \frac{\alpha_i (p_{i0}-x_0)}{\sum_j \alpha_j (p_{j0}-x_0)}\,\,\text{and}\, \,\,\yb_i=\frac{y_{i0}}{p_{i0}-x_0}\left(\xb-\pb_i\right),\, \text{for }\, i\in\I, \]
	
	\noindent that satisfies the dual constraint $\sum_{i=1}^m y_i = e_1$, and, together with $x$, the complementary slackness conditions. 
\end{corollary}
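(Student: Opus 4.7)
The plan is to verify the two assertions directly: first that $\sum_i y_i = e_1$, which splits into the scalar condition $\sum_i y_{i0}=1$ and the vector condition $\sum_i \yb_i=0$; and second that $\inner{p_i-x}{y_i}=0$ for every $i$. The construction of the $y_i$ is exactly the one built in the converse direction of Theorem~\ref{theo:convhull}, and the crucial observation is that in those algebraic manipulations the sign of $\alpha_i$ was never used — all that is needed are the affine-combination identities $\sum_{i\in\I}\alpha_i=1$ and $\sum_{i\in\I}\alpha_i\pb_i=\xb$, together with $\norm{\pb_i-\xb}=p_{i0}-x_0$ for $i\in\I$.

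For the first assertion, since $y_i=0$ for $i\notin\I$, the scalar sum $\sum_i y_{i0}$ telescopes by design: the numerators $\alpha_i(p_{i0}-x_0)$ add to the common denominator $\sum_j \alpha_j(p_{j0}-x_0)$, yielding $1$. For the vector sum I would substitute the formula for $\yb_i$, pull the common denominator outside, and reduce the remaining expression to $\xb\sum_{i\in\I}\alpha_i - \sum_{i\in\I}\alpha_i\pb_i$, which vanishes by the affine identities.

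For the complementary slackness condition the case $i\notin\I$ is trivial since $y_i=0$. For $i\in\I$ I would expand $\inner{p_i-x}{y_i}=(p_{i0}-x_0)y_{i0}+\inner{\pb_i-\xb}{\yb_i}$, substitute the definition of $\yb_i$ to rewrite the second term as $-\frac{y_{i0}}{p_{i0}-x_0}\norm{\pb_i-\xb}^2$, and then apply $\norm{\pb_i-\xb}=p_{i0}-x_0$ to cancel against the first term.

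I do not expect any real obstacle: the algebra is essentially already carried out inside the proof of Theorem~\ref{theo:convhull}. The only conceptual point worth highlighting is that relaxing the requirement $\alpha_i\ge 0$ to $\alpha_i\in\RR$, and correspondingly dropping the primal-feasibility hypothesis on $x$, sacrifices only the conic membership $y_i\geqQ 0$, while preserving both the equality constraints $\sum_i y_i = e_1$ and complementary slackness — which is precisely what the corollary claims.
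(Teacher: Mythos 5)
Your proof is correct and takes essentially the same route as the paper: the paper states the corollary as a direct consequence of the converse direction of the proof of Theorem~\ref{theo:convhull}, and you make explicit exactly the point the paper leaves implicit, namely that the construction there never uses $\alpha_i\ge 0$ or primal feasibility of $x$, only the affine identities $\sum_{i\in\I}\alpha_i=1$, $\sum_{i\in\I}\alpha_i\pb_i=\xb$, and the active-constraint equalities $\norm{\pb_i-\xb}=p_{i0}-x_0$. Your direct verifications of $\sum_i y_{i0}=1$, $\sum_i\yb_i=0$, and $\inner{p_i-x}{y_i}=0$ are all sound.
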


\noindent Note that, unless $\{\pb_i:\, i\in\I\}$ is affinely independent, the coefficients of the affine combination are not unique and therefore $x$ may correspond to more than one such dual solution. If, additionally, $\xb\in\conv(\{\pb_i:\, i\in\I \})$, then $y_{i0}\geq 0$ for all $i$, and so $x$ corresponds to a dual feasible solution. 
\bigskip 

Finally, we can use the result from Theorem \ref{theo:convhull} to easily calculate algebraically the solution to $\InfQP$ when $\Pp$ only has two points.
\begin{theorem}\label{theo:2pts}
	The solution to $Inf_\Q(\{p_1, p_2\})$ is given by
	\begin{equation}\label{eq:2pts}
	x_0^* = \min\left(p_{10}, p_{20}, \frac{p_{10}+ p_{20} -\norm{\pb_1-\pb_2}}{2}\right),\,
	\xb^* = \frac{(p_{10}-x_{0}^*)\pb_{2} +  (p_{20}-x_{0}^*)\pb_{1} }{(p_{10}-x_{0}^*) + (p_{20}-x_{0}^*)}.
	\end{equation}
\end{theorem}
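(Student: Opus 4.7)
The plan is to apply the optimality characterization of Theorem~\ref{theo:convhull}: any optimum $x^*$ must satisfy $\xb^* \in \conv(\{\pb_i:\,\norm{\pb_i - \xb^*} = p_{i0} - x_0^*\})$. With only two points, this active set consists either of a single point or of both, so the argument naturally splits into two cases, and the three-term $\min$ in (\ref{eq:2pts}) is what will ultimately select the correct branch.

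In the first case, suppose exactly one of the two points, say $p_1$, is active at the optimum. Then $\xb^* = \pb_1$, so $x_0^* = p_{10}$, and primal feasibility $x^* \leqQ p_2$ becomes $\norm{\pb_1 - \pb_2} \leq p_{20} - p_{10}$, i.e., $p_1 \leqQ p_2$; Lemma~\ref{lem:onePointSol} confirms this is optimal. A short check shows that $p_1 \leqQ p_2$ forces both $p_{10} \leq p_{20}$ and $p_{10} \leq (p_{10} + p_{20} - \norm{\pb_1 - \pb_2})/2$, so the $\min$ in (\ref{eq:2pts}) evaluates to $p_{10}$, and the weighted-average formula collapses to $\pb_1$ because the coefficient $p_{10} - x_0^*$ of $\pb_2$ vanishes. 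The symmetric case $p_2 \leqQ p_1$ is handled identically.

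In the second case both points are active. Writing $\xb^* = t\pb_1 + (1-t)\pb_2$ with $t \in [0,1]$, the activity equations $\norm{\pb_i - \xb^*} = p_{i0} - x_0^*$ become
\[
(1-t)\,\norm{\pb_1 - \pb_2} = p_{10} - x_0^*, \qquad t\,\norm{\pb_1 - \pb_2} = p_{20} - x_0^*.
\]
Adding yields $x_0^* = (p_{10} + p_{20} - \norm{\pb_1 - \pb_2})/2$, and solving for $t$ recovers exactly the stated convex combination for $\xb^*$. The one piece of bookkeeping I expect to require care is verifying that in this case the value $(p_{10} + p_{20} - \norm{\pb_1 - \pb_2})/2$ is the three-term minimum in (\ref{eq:2pts}); this reduces to $\norm{\pb_1 - \pb_2} \geq |p_{10} - p_{20}|$, which is precisely the negation of both $p_i \leqQ p_j$ conditions and hence holds in this case. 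Degenerate coincidences such as $p_1 = p_2$ are absorbed into the first case through the $\min$.
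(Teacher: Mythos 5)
Your proposal is correct and follows precisely the route the paper itself signals: the paper introduces Theorem~\ref{theo:2pts} with the remark that it follows by applying Theorem~\ref{theo:convhull} (supplemented by Lemma~\ref{lem:onePointSol} for the degenerate one-point case), and gives no further proof. Your case split on the active set $\{p_1\}$, $\{p_2\}$, or $\{p_1,p_2\}$, the derivation of $x_0^*=(p_{10}+p_{20}-\norm{\pb_1-\pb_2})/2$ by adding the two activity equations, the recovery of $\xb^*$ from the coefficient $t$, and the bookkeeping showing the three-term $\min$ selects the correct branch via $\norm{\pb_1-\pb_2}\gtrless|p_{10}-p_{20}|$ are all exactly the computation the paper has in mind.
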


\subsection{The definition of \emph{support set} and \emph{S-pair}}

We now define \emph{support set} for the $Inf_\Q(\Pp)$ problem the same way a basis is defined for an LP-type problem \cite{Dyer04}.

\begin{definition}[{Support set}]\label{def:suppset}
	A subset $\Ss$ is called a \emph{support set} if no proper subset $\Ss'$ of $\Ss$ is such that $Inf_\Q(\Ss')=Inf_\Q(\Ss)$. A subset $\Ss\subseteq \Pp$ is said to be an \emph{optimal support set} if 	$\Ss$ is a support set and $Inf_\Q(\Ss)=Inf_\Q(\Pp)$.
\end{definition}

\noindent Note that an optimal support set may not be unique.
\begin{definition}[{Dual feasible S-pair}]\label{def:dualSpair}
	Let $\Ss\subseteq\Pp$ and $x$ a vector. We say $(\Ss, x)$ is a \emph{dual feasible S-pair} if
	\begin{enumerate}[(a)]
		\item the points of $\Ss$ lie on the boundary of $x+\Q$, that is, 
		\[\norm{\pb_i-\xb} = p_{i0}-x_0,\quad \forall\,p_i\in\Ss;\]
		\item $\xb\in\ri\conv(\Sb)$;
		\item  $\Sb$ is affinely independent.
	\end{enumerate}
\end{definition}

Comparing with LP, \emph{support set} is analogous to the definition of \emph{basis}, and \emph{dual feasible S-pair} to the definition of \emph{dual basic feasible solution}. 

Using Theorem \ref{theo:convhull}, it is now possible to prove the following equivalence.

\begin{theorem}\label{theo:suppset1}
	$(\Ss, x)$ is a dual feasible S-pair iff $\Ss$ is a support set and $x$ is the solution to $\InfQS$. 
\end{theorem}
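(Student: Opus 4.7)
The plan is to prove both directions by invoking Theorem \ref{theo:convhull}, exploiting that since the primal optimum is unique, $Inf_\Q(\Ss')=Inf_\Q(\Ss)$ for $\Ss'\subseteq\Ss$ is equivalent to the optimum $x$ of $Inf_\Q(\Ss)$ remaining optimal for $Inf_\Q(\Ss')$.

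For the forward direction, assume $(\Ss,x)$ is a dual feasible S-pair. Condition (a) gives primal feasibility $x\leqQ p_i$ for all $p_i\in\Ss$ and forces the identity $\{i:p_i\in\Ss,\,\|\pb_i-\xb\|=p_{i0}-x_0\}=\{i:p_i\in\Ss\}$. Together with (b), which gives $\xb\in\conv(\Sb)$, Theorem \ref{theo:convhull} immediately certifies that $x$ is the solution to $Inf_\Q(\Ss)$. To see $\Ss$ is a support set, I would show that deleting any single $p_k\in\Ss$ strictly changes the optimum: by the affine independence in (c), $\xb$ admits a unique affine representation in $\Sb$; by (b) this representation has all positive coefficients, so $\xb\notin\conv(\Sb\setminus\{\pb_k\})$. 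Hence Theorem \ref{theo:convhull} cannot certify $x$ as optimal for $Inf_\Q(\Ss\setminus\{p_k\})$, and by uniqueness of the primal optimum the value must strictly change.

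For the reverse direction, suppose $\Ss$ is a support set with $x$ the solution to $Inf_\Q(\Ss)$, and I would verify (a), (c), (b) in turn. For (a), if some $p_k\in\Ss$ satisfied $\|\pb_k-\xb\|<p_{k0}-x_0$, then $k$ would not appear in the index set of Theorem \ref{theo:convhull}, so removing $p_k$ preserves both primal feasibility of $x$ and the convex-hull condition; thus $x$ remains optimal for $Inf_\Q(\Ss\setminus\{p_k\})$, contradicting support-set. For (c), if $\Sb$ were affinely dependent, Carath\'eodory's theorem would allow $\xb\in\conv(\Sb)$ to be written as a convex combination of an affinely independent proper subset $\Sb''\subsetneq\Sb$; the corresponding $\Ss''\subsetneq\Ss$ then satisfies the hypotheses of Theorem \ref{theo:convhull} at $x$ (the boundary condition follows from (a)), again contradicting support-set. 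For (b), once $\Sb$ is affinely independent, $\conv(\Sb)$ is a simplex whose relative boundary is the union of convex hulls of proper subsets of $\Sb$; if $\xb$ were on this relative boundary, some $\pb_k$ could be dropped while retaining $\xb\in\conv(\Sb\setminus\{\pb_k\})$, and the same reasoning produces a contradiction.

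The main obstacle I foresee is keeping the reverse direction properly ordered, since (c) uses (a) to guarantee that every $p_i\in\Ss$ lies on $\partial(x+\Q)$, and (b) in turn uses the affine independence established in (c); at each step one must carefully track which index set survives the deletion of a point. A minor care point throughout is the degenerate case $\xb=\pb_k$ for some $p_k\in\Ss$, but this fits into the framework since $\|\pb_k-\xb\|=0=p_{k0}-x_0$ and $\xb=\pb_k\in\ri\conv(\{\pb_k\})$, and the support-set property then forces $\Ss=\{p_k\}$, consistent with Lemma \ref{lem:onePointSol}.
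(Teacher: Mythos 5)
Your proof is correct and follows the paper's intended approach: the paper offers no explicit proof of this theorem, stating only that it can be proved using Theorem~\ref{theo:convhull}, and your argument carries that out in both directions (and your ordering (a), then (c), then (b) in the reverse direction is indeed the right way to untangle the dependencies). One step you pass over quickly---that checking single-point deletions suffices for the support-set property---is justified by monotonicity of $Inf_\Q$ under deletion of constraints: for $\Ss'\subsetneq\Ss$, pick $p_k\in\Ss\setminus\Ss'$ and observe $Inf_\Q(\Ss')\geq Inf_\Q(\Ss\setminus\{p_k\})>Inf_\Q(\Ss)$, the last inequality being the one you establish from $\xb\notin\conv(\Sb\setminus\{\pb_k\})$ together with uniqueness of the optimizer.
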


\smallskip

As a consequence of the previous theorem, a support set has at least $1$ point and at most $n$ points. This is, in fact, a well known result: the minimum enclosing ball of balls in $\RR^{n-1}$ is defined by at most $n$ balls. Another consequence, is the fact that if $(\Ss, x)$ is a dual feasible S-pair and $x$ is primal feasible, then $x$ solves $\InfQP$.

\medskip   

Finally, Theorem \ref{theo:suppset4} is a direct consequence of the definition of support set.
\begin{theorem}\label{theo:suppset4}
	Let $\Ss$ be a support set. If a point $p^*$ is infeasible to $Inf_\Q(\Ss)$, then $p^*$ belongs to an optimal support set for problem $Inf_\Q(\Ss\cup\{p^*\})$.
\end{theorem}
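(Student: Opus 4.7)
The plan is to argue by contradiction using the monotonicity of $Inf_\Q$: whenever $\mathcal{A}\subseteq \mathcal{B}$, one has $Inf_\Q(\mathcal{A})\geq Inf_\Q(\mathcal{B})$, since enlarging the set only appends constraints of the form $x\leqQ p$ to a maximization problem. Combined with uniqueness of the primal optimum (noted just before Lemma \ref{lem:onePointSol}), this will be enough to force a \emph{strict} decrease in the optimal value once $p^*$ is adjoined to $\Ss$, and the rest is bookkeeping on the definition of support set.

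Let $x_\Ss$ denote the optimal solution of $Inf_\Q(\Ss)$ and $x^*$ that of $Inf_\Q(\Ss\cup\{p^*\})$. The hypothesis that $p^*$ is infeasible to $Inf_\Q(\Ss)$ says exactly that $x_\Ss$ fails the additional constraint $x\leqQ p^*$, so $x_\Ss$ is not feasible for the enlarged problem and hence $x^*\neq x_\Ss$. Because the primal optimum is unique, the weak monotonicity inequality cannot be tight and upgrades to the strict inequality
\[
Inf_\Q(\Ss\cup\{p^*\})\;<\;Inf_\Q(\Ss).
\]

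Now take any optimal support set $\Ss'$ of $Inf_\Q(\Ss\cup\{p^*\})$ and suppose, for contradiction, that $p^*\notin\Ss'$. Then $\Ss'\subseteq\Ss$, so monotonicity gives $Inf_\Q(\Ss')\geq Inf_\Q(\Ss)$. On the other hand, optimality of $\Ss'$ for the enlarged problem yields $Inf_\Q(\Ss')=Inf_\Q(\Ss\cup\{p^*\})$. Chaining these two identities produces $Inf_\Q(\Ss\cup\{p^*\})\geq Inf_\Q(\Ss)$, contradicting the strict inequality above. Therefore $p^*$ must belong to every optimal support set of $Inf_\Q(\Ss\cup\{p^*\})$, which proves the theorem. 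The only genuinely content-bearing step is the strict-inequality upgrade in the second paragraph: that is exactly where the infeasibility hypothesis and the uniqueness of the primal optimum are used.
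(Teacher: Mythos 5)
Your proof is correct. The paper gives no proof of this theorem, stating only that it is ``a direct consequence of the definition of support set,'' and your argument is precisely the natural way to spell that out: monotonicity of $Inf_\Q$ under set inclusion, plus uniqueness of the primal optimum, force a strict drop in the optimal value once the violated point is adjoined, and then the definition of optimal support set rules out $p^*\notin\Ss'$. The only step worth tightening is the strict-inequality upgrade: to see that tightness is impossible you should state explicitly that $x^*$ (being feasible for the enlarged problem) is a fortiori feasible for $Inf_\Q(\Ss)$, so if $x^*_0=(x_\Ss)_0$ then $x^*$ would also be optimal for $Inf_\Q(\Ss)$, whence $x^*=x_\Ss$ by uniqueness, contradicting that $x_\Ss$ is infeasible for the enlarged problem. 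Incidentally, your argument shows that the hypothesis that $\Ss$ is a support set is never actually used, and that $p^*$ in fact lies in \emph{every} optimal support set of $Inf_\Q(\Ss\cup\{p^*\})$; both observations are harmless strengthenings of the stated claim.
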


\section{Algorithm description}\label{sec:algorithm} We now outline the basic framework of the algorithm:
\smallskip

\begin{description}
	\item[Initialization:] Suppose a dual feasible S-pair $(\Ss^0, x^0)$, $\Ss^0\subseteq\Pp$, is given.
	\item[Loop:] For $j=0,1,2,...$, do:
		\begin{enumerate}[(a)]
			\item If $x^j$ is primal feasible, stop - $x^j$ is the optimal solution of (\ref{primal}).
			\item Else, get a $p^*\in\Pp$ corresponding to a chosen violated constraint.
			\item Obtain a new dual feasible S-pair $(\hat{\Ss}\cup\{p^*\}, \hat{x})$, for $\hat\Ss\subseteq\Ss^j$ and $\hat x_0<x_0^{j}$; 
			
			Set $(\Ss^{j+1}, x^{j+1})\leftarrow (\hat\Ss\cup\{p^*\}, \hat x)$.
		\end{enumerate}
\end{description}

\noindent Note that, as an initial solution, one can simply pick any point $p\in\Pp$, and consider the dual feasible S-pair $(\{p\}, p)$.

\smallskip

The core of the algorithm is step (c) which will consist on a sequence of \emph{curve searches} that keep dual feasibility. Assume, for now, that $\Sb^j\cup\{\pb^*\}$ is affinely independent. Since, at each iteration $j$, we start with a dual feasible S-pair $(\Ss^j, x^j)$, the set of constraints associated with $\Ss^j$ are active at $x^j$. As we shall see, the set of points $x$ that correspond to dual feasible solutions and that keep those constraints active define a curve (see section \ref{subsec:curve}). A \emph{curve search} basically consists on the following problem: starting at $x^j$, we ``move'' on the curve in the direction of decrease of $x_0$ until either dual feasibility is lost or $p^*$ becomes feasible, whichever happens first. As we will see, we will be able to calculate exactly the points where dual feasibility is lost and where $p^*$ becomes feasible, so the \emph{curve search} will be \emph{exact}. This then boils down to calculating what we will define as the \emph{partial step} and the \emph{full step}:

\smallskip
\noindent \emph{\textbf{Partial step}}: the maximum step on the curve without violating dual feasibility. The point corresponding to that step, which we will denote by $x^{partial}$, is the first point on the curve where one of the dual variables, which vary non-linearly as we move on the curve, becomes zero.

\smallskip
	
\noindent\emph{\textbf{Full step}}: the minimum step on the curve such that the constraint corresponding to $p^*$ is feasible, that is, when it becomes active. When it exists, we denote that point by $x^{full}$.	

{\medskip}

\noindent If the full step happens before the partial step, then $x^{full}$ corresponds to a dual feasible variable and so it is the optimal solution to the subproblem $\InfQSjp$. In this case, a major iteration is complete, and we go back to Step (a) after setting $(\Ss^{j+1}, x^{j+1})\leftarrow(\Ss^j\cup\{p^*\}, x^{full})$. Otherwise, if the partial step happens first, then the point (say $p_k\in\Ss^j$) corresponding to the dual variable that was about to become infeasible is dropped from $\Ss^j$. A new curve search starting at $x^{partial}$ is then performed. Eventually, after at most $|\Ss^j|$ curve searches where the value of $x_0$ either decreases or stays the same, the optimal solution of $\InfQ(\Ss'\cup\{p^*\})$, for some $\hat\Ss\subseteq \Ss^j$, with a strictly smaller objective function is found.

\smallskip

A curve search is only performed whenever $\Sb^j\cup\{\pb^*\}$ affinely independent. When that is not the case, a point is immediately dropped from $\Ss^j$ before a curve search is done (this step can actually be seen as a partial step with zero length). Section \ref{subsec:curve} defines the curve and section \ref{subsec:search} explains the details of the curve search.

\subsection{The curve}\label{subsec:curve}

Consider iteration $j$. Consider
\begin{itemize}
	\item $(\Ss^j, x^j)$, a dual feasible $\Ss$-pair for $\InfQSj$, with $\Ss^j\subseteq \Pp$;
	\item $p^*\in\Pp$ corresponding to an infeasible constraint at $x^j$;
	\item  $\Sb^j := \{\pb:\, p\in\Ss^j\}$ and $s = |\Ss^j|$.
\end{itemize}

\smallskip

The algorithm restricts the search for the next iterate to the set of primal solutions $x$ where dual feasibility and complementary slackness are maintained, that is
\begin{align}
\norm{\pbij-\xb}= \pijo -x_0, &\quad \forall\, \pij\in\Ss^j,\label{eq:curveCones}\\
\xb\in\convSjp.&\label{eq:curveConv}
\end{align}
\noindent while it decreases the objective function value $x_0$. 

In this section we shall see that in general the set of points that satisfy (\ref{eq:curveCones}) and
\begin{equation}\label{eq:curveAff}
\xb\in\affSjp
\end{equation}
\noindent constitute a manifold of dimension $1$, that is, a curve.

\smallskip
By squaring equations (\ref{eq:curveCones}) it is easy to prove the following Lemma:
\begin{lemma}\label{lem:curve}
	If $|\Ss^j|>1$, define
	\begin{align*}
	&M=\left[\begin{array}{cccc} \pb_{j_2}-\pb_{j_1}& \pb_{j_3}-\pb_{j_1}&...& \pb_{j_s}-\pb_{j_1}\end{array}\right],\\
	&c = \left(\begin{array}{c}p_{{j_2}0}-p_{{j_1}0}\\\vdots\\ p_{{j_s}0}-p_{{j_1}0}\end{array}
	\right),   
	\,\text{and}\,\,
	b = \frac12\left(\begin{array}{c}\norm {\pb_{j_2}}^2 - p_{{j_2}0}^2 -\norm{\pb_{j_1}}^2 + p_{{j_1}0}^2\\\vdots\\ \norm {\pb_{j_s}}^2 - p_{{j_s}0}^2 -\norm {\pb_{j_1}}^2 + p_{{j_1}0}^2\end{array}	\right).
	\end{align*}
	\noindent Otherwise, when $|\Ss^j|=1$, $M=[\;]$, $b=[\;]$, and $c=[\; ]$.
	
	Conditions (\ref{eq:curveCones}) are equivalent to the following conditions
	\begin{subequations}\label{eq:curve}
		\begin{align}
		&M^T \xb = b +x_0c,\label{eq:curve1}\\
		&\norm{\pb_{j_1}-\xb}^2 = \left(p_{{j_1}0}-x_0\right)^2,\label{eq:curve2}\\
		&x_0 \leq \min_{p_{j_i}\in\Ss^j}\{p_{{j_i}0}\}.\label{eq:curve3}
		\end{align}	
	\end{subequations}
\end{lemma}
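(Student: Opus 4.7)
The plan is to prove both directions of the equivalence by squaring the norm equations and exploiting the fact that the differences of the resulting quadratic equations are linear in $\xb$ and $x_0$.

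First, I would observe that any equation of the form $\norm{\pbij-\xb}=\pijo-x_0$ forces $\pijo\geq x_0$ since the left-hand side is a norm. Taking the minimum over $i$ immediately gives condition (\ref{eq:curve3}), and conversely, under (\ref{eq:curve3}) the right-hand side of each norm equation is nonnegative, so the equation $\norm{\pbij-\xb}=\pijo-x_0$ is equivalent to its squared form $\norm{\pbij-\xb}^2=(\pijo-x_0)^2$. This reduces the lemma to showing that, under (\ref{eq:curve3}), the $s$ squared equations are equivalent to (\ref{eq:curve1}) together with (\ref{eq:curve2}).

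Next, I would expand each squared equation as
\[
\norm{\pbij}^2 -2\inner{\pbij}{\xb}+\norm{\xb}^2 = \pijo^2 -2\pijo x_0 + x_0^2,
\]
and subtract the equation for $i=1$ from the one for $i\geq 2$. The quadratic terms $\norm{\xb}^2$ and $x_0^2$ cancel, leaving
\[
2\inner{\pbij-\pb_{j_1}}{\xb} = \norm{\pbij}^2-\norm{\pb_{j_1}}^2 -\pijo^2+p_{j_10}^2 + 2(\pijo-p_{j_10})x_0.
\]
Dividing by $2$, the left-hand side is the $(i-1)$-th entry of $M^T\xb$, while the right-hand side matches the $(i-1)$-th entry of $b+x_0 c$. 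Thus the $s-1$ equations obtained by subtraction are precisely (\ref{eq:curve1}), and the retained equation for $i=1$ is (\ref{eq:curve2}). This proves the forward implication.

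For the converse, I would simply run the argument backward: assume (\ref{eq:curve1}), (\ref{eq:curve2}), (\ref{eq:curve3}). Condition (\ref{eq:curve2}) combined with the $i=1$ case of (\ref{eq:curve3}) gives $\norm{\pb_{j_1}-\xb}=p_{j_10}-x_0$ after taking nonnegative square roots. For $i\geq 2$, multiplying the $(i-1)$-th row of (\ref{eq:curve1}) by $2$ and adding it to (\ref{eq:curve2}) reassembles the quadratic identity $\norm{\pbij-\xb}^2=(\pijo-x_0)^2$; condition (\ref{eq:curve3}) then allows taking the nonnegative square root to recover $\norm{\pbij-\xb}=\pijo-x_0$. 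The edge case $|\Ss^j|=1$ is trivial since then $M$, $b$, $c$ are empty and only (\ref{eq:curve2}) and (\ref{eq:curve3}) remain, which together are equivalent to the single norm equation. There is no real obstacle here; the argument is pure algebraic bookkeeping, and the only delicate point is tracking the sign so that squaring and unsquaring are reversible, which is exactly the role of (\ref{eq:curve3}).
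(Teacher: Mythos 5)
Your proof is correct and follows exactly the "squaring" route the paper alludes to but leaves unwritten: square each norm equation, subtract the $i=1$ equation from the rest so the quadratic terms cancel and (\ref{eq:curve1}) drops out, keep the $i=1$ squared equation as (\ref{eq:curve2}), and note that (\ref{eq:curve3}) is exactly the sign condition making the squaring reversible. Nothing is missing.
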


%\begin{proof}We square equations (\ref{eq:curveCones}) obtaining the equivalent conditions
%	\[	\norm{\pb_{j_i}-\xb}^2 - \left(p_{{j_i}0}-x_0\right)^2=0 \quad\text{and}\quad p_{j_i}-x_0\geq 0,\quad p_{j_i}\in\Ss^j,\]
%	\noindent which are equivalent to the system of equations and one inequality
%	\begin{equation}
%	\norm{\pb_{j_i}-\xb}^2 - \left(p_{{j_i}0}-x_0\right)^2 = \norm{\pb_{j_1}-\xb}^2 - \left(p_{{j_1}0}-x_0\right)^2,\quad p_{j_i}\in\Ss^j\setminus\{p_{j_1}\},\label{aux1}
%	\end{equation}
%	\[\norm{\pb_{j_1}-\xb }^2 = (p_{{j_1}0}-x_0 )^2\]
%	\[x_0 \leq \min_{p_{j_i}\in\Ss^j}\{p_{{j_i}0}\}.\]
%	After several simplifications (\ref{aux1}) is equivalent to 
%	\[-(p_{j_i0}-p_{j_10})x_0 + (p_{j_i}-p_{j_1})^T\xb =  \frac12\left(\norm {\pb_{j_i}}^2 - p_{{j_i}0}^2 -\norm {\pb_{j_1}}^2 + p_{{j_1}0}^2\right), \quad p_{j_i}\in\Ss^j,\]
%	\noindent that is, $-x_0c + M^T\xb = b$.
%	\end{proof}

\medskip

For the general case of $|\Ss^j|>1$, let us now define the following matrix and vectors
\begin{align*}
&M^+=\,(M^TM)^{-1}M^T,\\
&u\,=\,(M^TM)^{-1}(b-M^T\pb_{j_1}),\\
&v\,\,=\,(M^TM)^{-1}c, \\
&w\,=\,-M^+(\pb^*-\pb_{j_1}),\\
&z\,\,\,=\, (I-MM^+)(\pb^*-\pb_{j_1}).% = Mw + (\pb^*-\pb_{j_1}) .
\end{align*}
\noindent Matrix $M^+$ is the Moore-Penrose inverse, or pseudo-inverse, of $M$. Since $M$ is full column rank, $M^+$ is a left-inverse ($M^+M=I$). When $|\Ss^j|=1$, simply consider 
\[u=v=w=[\;]\quad\text{and}\quad z=\pb^*-\pb_{j_1}\]

\begin{theorem}\label{theo:paramCurve}
	Conditions (\ref{eq:curveCones}) and (\ref{eq:curveAff}) define points $(x_0;\xb)$ such that
	\begin{equation}\label{eq:curvelem1}
	\xb= M(u+ x_0v)  +\alpha^* z+ \pb_{j_1},
	\end{equation}
	
	\noindent for $x_0\leq \min_{p_{j_i}\in\Ss^j}\{p_{{j_i}0}\}$, $\alpha^*\in \RR$, and $x_0$ and $\alpha^*$ such that
	\begin{equation}\label{eq:curvelem2}
	(\alpha^*)^2\norm{z}^2 + \norm{M(u+ x_0v)}^2 - (p_{10}-x_0)^2 =0.
	\end{equation}
	
\end{theorem}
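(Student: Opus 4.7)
The plan is to combine the linear characterization of (\ref{eq:curveCones}) from Lemma \ref{lem:curve} with the affine parametrization from (\ref{eq:curveAff}), and then reduce the remaining quadratic condition to the single equation (\ref{eq:curvelem2}). I will treat the main case $|\Ss^j|>1$ and briefly indicate the trivial adaptation for $|\Ss^j|=1$.

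First, by Lemma \ref{lem:curve}, condition (\ref{eq:curveCones}) is equivalent to (\ref{eq:curve1})--(\ref{eq:curve3}). Since $\bar{\Ss}^j\cup\{\bar p^*\}$ is being considered in (\ref{eq:curveAff}), any $\bar x$ satisfying (\ref{eq:curveAff}) can be written as
\[
\bar x = \bar p_{j_1} + M\beta + \alpha^* (\bar p^*-\bar p_{j_1}),\qquad \beta\in\RR^{s-1},\ \alpha^*\in\RR,
\]
because the columns of $M$ together with $\bar p^*-\bar p_{j_1}$ span the affine hull of $\bar{\Ss}^j\cup\{\bar p^*\}$ translated to the origin at $\bar p_{j_1}$.

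Next, I would substitute this expression into the linear equation (\ref{eq:curve1}). Because $\bar{\Ss}^j$ is affinely independent, $M$ has full column rank and $M^TM$ is invertible, so I can solve uniquely for $\beta$:
\[
M^TM\,\beta = b + x_0 c - M^T\bar p_{j_1} - \alpha^* M^T(\bar p^*-\bar p_{j_1}),
\]
which, after multiplying by $(M^TM)^{-1}$ and comparing with the definitions of $u$, $v$, $w$, gives $\beta = u + x_0 v + \alpha^* w$. Plugging back and using $Mw + (\bar p^*-\bar p_{j_1}) = (I - MM^+)(\bar p^*-\bar p_{j_1}) = z$ produces exactly (\ref{eq:curvelem1}).

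It remains to impose the nonlinear equation (\ref{eq:curve2}). From (\ref{eq:curvelem1}) I have $\bar x - \bar p_{j_1} = M(u+x_0 v) + \alpha^* z$, so
\[
\|\bar x - \bar p_{j_1}\|^2 = \|M(u+x_0 v)\|^2 + 2\alpha^*\,\langle M(u+x_0v), z\rangle + (\alpha^*)^2\|z\|^2.
\]
The hard (and key) step is to observe that the cross term vanishes: using $M^+=(M^TM)^{-1}M^T$, one has $M^TMM^+=M^T$, hence $M^Tz = M^T(I-MM^+)(\bar p^*-\bar p_{j_1}) = 0$, so $\langle M(u+x_0v),z\rangle=(u+x_0v)^TM^Tz=0$. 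Equating $\|\bar x - \bar p_{j_1}\|^2$ to $(p_{j_1 0}-x_0)^2$ from (\ref{eq:curve2}) then yields exactly (\ref{eq:curvelem2}). Condition (\ref{eq:curve3}) simply becomes the stated upper bound on $x_0$. For the degenerate case $|\Ss^j|=1$, $M$, $u$, $v$, $w$ are empty and $z = \bar p^*-\bar p_{j_1}$, so (\ref{eq:curvelem1}) reduces to $\bar x = \bar p_{j_1} + \alpha^* z$ and (\ref{eq:curvelem2}) to $(\alpha^*)^2\|z\|^2 = (p_{j_1 0}-x_0)^2$, which is immediately verified from (\ref{eq:curve2}) and the one-point affine hull condition.
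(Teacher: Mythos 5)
Your proof is correct and follows essentially the same route as the paper's: parametrize $\bar x$ over the affine hull with base point $\bar p_{j_1}$, substitute into (\ref{eq:curve1}) to solve for the $M$-coefficients, and then plug into (\ref{eq:curve2}). The one place you add genuine value is in explicitly verifying that the cross term $2\alpha^*\langle M(u+x_0v),z\rangle$ vanishes because $M^Tz=0$ (that $z$ is the orthogonal-complement projection of $\bar p^*-\bar p_{j_1}$); the paper simply asserts that plugging (\ref{eq:curvelem1}) into (\ref{eq:curve2}) yields (\ref{eq:curvelem2}), leaving the reader to notice this orthogonality on their own. You also spell out the $|\Ss^j|=1$ degenerate case, which the paper treats only implicitly via its conventions for the empty $M$, $u$, $v$, $w$. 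These are elaborations rather than a different approach.
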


\begin{proof} From (\ref{eq:curveAff}) we know that there exist $\alpha_1,\dots,\alpha_s,\alpha^*$ such that
	\[\xb = \sum_{i=1}^s \alpha_i\pb_{j_i} + \alpha^*\pb^*\quad\text{and}\quad\sum_{i=1}^s\alpha_i +\alpha^* =1,\]
	\noindent that is,
	\begin{equation}\label{eq:aux1}
	\xb = M\alpha_{2:s}	+ \alpha^*(\pb^*-\pb_{j_1}) + \pb_{j_1}
	\end{equation}
	\noindent with $\alpha_{2:s}$ the vector with $\alpha_2,\dots,\alpha_s$.
	Substituting in (\ref{eq:curve1}), we have
	\[M^TM\alpha_{2:s} + \alpha^*M^T(\pb^*-\pb_{j_1}) + M^T\pb_{j_1} = b+x_0c,\]
	\noindent which yields
	\begin{equation}\label{eq:aux2}
	\alpha_{2:s} =	u + x_0v + \alpha^*w,
	\end{equation}
	\noindent with $u$, $v$, and $w$ as defined above. Combining (\ref{eq:aux1}) and (\ref{eq:aux2}) we obtain (\ref{eq:curvelem1}). Finally, by plugging (\ref{eq:curvelem1}) in  (\ref{eq:curve2}) we obtain (\ref{eq:curvelem2}).
\end{proof}

\subsubsubsection{The curve: the affinely independent case}

\noindent When $\Sb^j\cup\{\pb^*\}$ is affinely independent then we always have $z\neq 0$, thus the quadratic equation (\ref{eq:curvelem2}) can be solved for $\alpha^*$, the coefficient of the affine combination associated with $p^*$, obtaining
\begin{equation}\label{eq:alphastarPre}
\alpha^*(x_0)=\pm\frac1{\norm{z}}\sqrt{(p_{10}-x_0)^2 -  \norm{M(u+ x_0v)}^2}.
\end{equation}
We then get the points that satisfy (\ref{eq:curveCones},\ref{eq:curveAff}) as a function of a single variable $x_0\in[-\infty, \min_{p_{j_i}\in\Ss^j}\{p_{{j_i}0}\}]$, thus defining a curve $\Gamma$. This is shown by Corollary \ref{corol:paramCurve}.

\begin{corollary}\label{corol:paramCurve}
	Consider $\Sb^j\cup\{\pb^*\}$ affinely independent. Then, conditions (\ref{eq:curveCones},\ref{eq:curveAff}) define a curve parameterized by 
	\[x_0\in \left[-\infty, \min_{p_{j_i}\in\Ss^j}\{p_{{j_i}0}\}\right],\] 
	\noindent such that 
	\begin{equation*}\label{eq:curvetheo4}
		\xb	=\Gamma (x_0) := M(u+ x_0v)  \pm\frac z{\norm{z}}\sqrt{(p_{10}-x_0)^2 -  \norm{M(u+ x_0v)}^2} + \pb_{j_1}.
	\end{equation*}
\end{corollary}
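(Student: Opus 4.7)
The plan is to leverage Theorem \ref{theo:paramCurve}, which under conditions (\ref{eq:curveCones}) and (\ref{eq:curveAff}) already expresses every admissible point through the affine formula (\ref{eq:curvelem1}) coupled with the scalar quadratic (\ref{eq:curvelem2}). The affine independence hypothesis lets us solve this quadratic explicitly for $\alpha^*$ as a function of $x_0$, and substituting back into (\ref{eq:curvelem1}) produces $\Gamma(x_0)$.

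The crucial step is to verify that $z\neq 0$; once that is in hand, the remainder is direct algebraic manipulation. Note that $MM^+$ is the orthogonal projector onto the column space of $M$, so $z=(I-MM^+)(\pb^*-\pb_{j_1})$ is precisely the component of $\pb^*-\pb_{j_1}$ orthogonal to $\mathrm{span}\{\pb_{j_2}-\pb_{j_1},\ldots,\pb_{j_s}-\pb_{j_1}\}$. If $z$ vanished, then $\pb^*-\pb_{j_1}$ would be a linear combination of the columns of $M$, placing $\pb^*$ in $\aff(\Sb^j)$ and contradicting the affine independence of $\Sb^j\cup\{\pb^*\}$. Hence $\norm{z}>0$, and (\ref{eq:curvelem2}) is a genuine quadratic in $\alpha^*$ with leading coefficient $\norm{z}^2$. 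Solving directly gives $\alpha^*(x_0)=\pm\frac{1}{\norm{z}}\sqrt{(p_{10}-x_0)^2-\norm{M(u+x_0v)}^2}$, i.e.\ expression (\ref{eq:alphastarPre}). Substituting this into $\xb=M(u+x_0v)+\alpha^* z+\pb_{j_1}$ from (\ref{eq:curvelem1}) yields exactly the claimed formula for $\Gamma(x_0)$.

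The one subtle point I anticipate is the statement of the domain. The upper bound $x_0\le\min_{p_{j_i}\in\Ss^j}\{p_{{j_i}0}\}$ is inherited directly from (\ref{eq:curve3}) of Lemma \ref{lem:curve}. The radical in $\alpha^*$ must additionally be non-negative for $\xb$ to be real-valued, so the curve is traced out on the intersection of this interval with $\{x_0:(p_{10}-x_0)^2\ge\norm{M(u+x_0v)}^2\}$. Non-emptiness of this effective domain is immediate from the hypothesis that $(\Ss^j,x^j)$ is a dual feasible S-pair: $\xb^j$ satisfies (\ref{eq:curveCones}) and lies in $\aff(\Sb^j)\subseteq\aff(\Sb^j\cup\{\pb^*\})$, so $x_0=x_0^j$ belongs to the effective domain (with $\alpha^*=0$, since affine independence forces uniqueness of the affine combination). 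Combined with the explicit parameterization above, this confirms that the admissible set is a one-dimensional manifold, whose two sign-branches meet at the values of $x_0$ where the discriminant vanishes.
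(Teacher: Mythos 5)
Your proof is correct and follows essentially the same route as the paper: invoke Theorem \ref{theo:paramCurve}, note that affine independence forces $z\neq 0$ (the paper asserts this without the projector argument you supply, which is a nice explicit justification), solve (\ref{eq:curvelem2}) for $\alpha^*$, and substitute back into (\ref{eq:curvelem1}). Your additional remarks on the effective domain and the non-emptiness argument via $x_0^j$ are correct elaborations of what the paper leaves implicit.
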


\noindent Geometrically, the curve is either a hyperbola (Figure~\ref{fig:curveA}a) or a degenerate hyperbola, see Figure~\ref{fig:curveA}b). The curve is also symmetric with respect to a reflection through the hyperplane $\{(x_0;\xb):\, \xb\in\affSjp,\,x_0\in\RR\}$ and intersects the boundary of $\conv(\Sb^j\cup\{\pb^*\})$ at two points (one of them in $\conv(\Sb^j)$).

\begin{center}\begin{figure}[h] \centering
		\subfigure[Case when $|\Ss^j|=2$.]{\includegraphics[height=3cm]{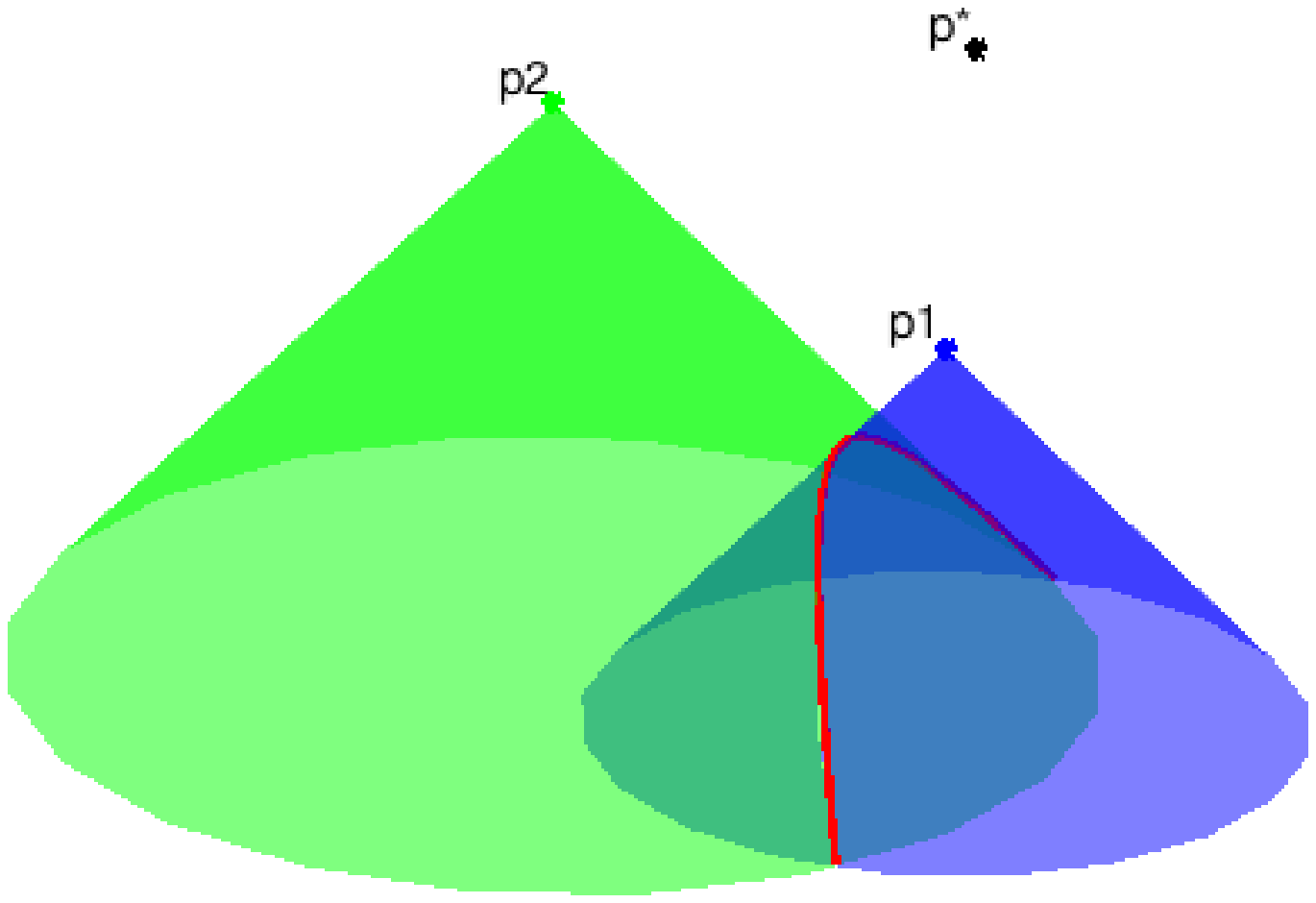}}
		\hspace{2cm}
		\subfigure[Case when $|\Ss^j|=1$.]{\includegraphics[height=3cm]{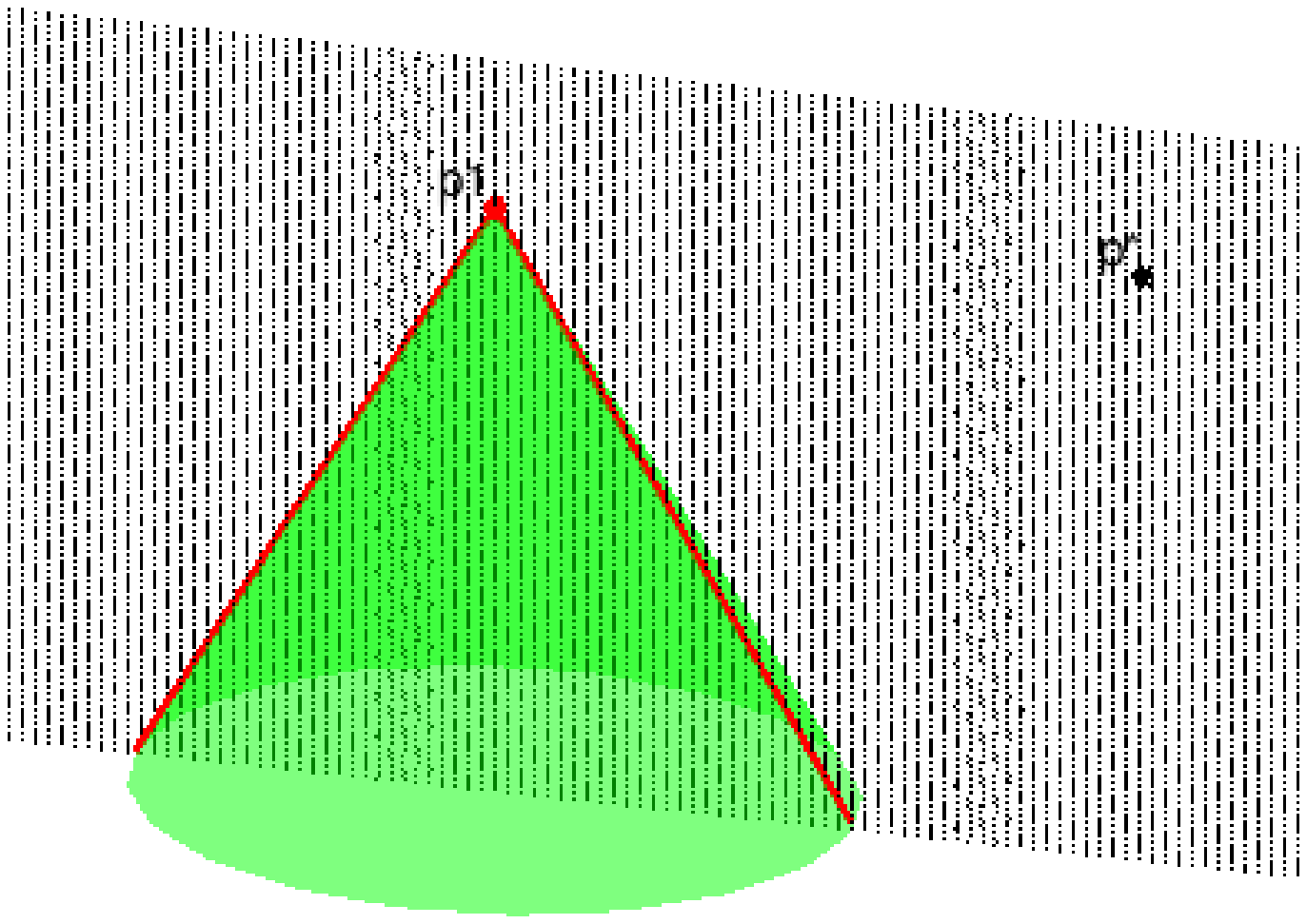}}
		\vspace{-0.3cm}
		\caption{Illustration of the curve in $\RR^3$.}\label{fig:curveA}	
	\end{figure}\end{center}

When we are interested in the part of the curve that corresponds to a dual feasible solution, we need all coefficients of the affine combination to be non-negative. Therefore the portion of the curve we are interested on corresponds to $\alpha^*(x_0)\geq 0$, that is
\[\xb=\Gamma^+ (x_0) := M(u+ x_0v)  +\frac z{\norm{z}}\sqrt{(p_{10}-x_0)^2 -  \norm{M(u+ x_0v)}^2} + \pb_{j_1}.\]

\medskip

Corollary \ref{corol:paramAlphas} is a direct consequence of Theorem \ref{theo:paramCurve}. 

\begin{corollary}\label{corol:paramAlphas}
	Consider $\Sb^j\cup\{\pb^*\}$ affinely independent. The points $x$ that satisfy (\ref{eq:curveCones},\ref{eq:curveAff}) and for which $\alpha^*(x_0)\geq 0$ can be written as an affine combination
	\begin{equation}\label{eq:curveAlphas}
	\xb = \sum_{i=1}^s \alpha_i(x_0)\pb_{j_i} + \alpha^{*+}(x_0)\pb^*,
	\end{equation}
	\noindent for $x_0\in [-\infty, \min_{p_{j_i}\in\Ss^j}\{p_{{j_i}0}\}]$, such that
	\begin{subequations}\label{eq:alphasx0}
		\begin{align}
		&\alpha^{*+}(x_0)= \frac1{\norm{z}}\sqrt{(p_{10}-x_0)^2 -  \norm{M(u+ x_0v)}^2},\label{eq:alphasx0star}\\
		&\alpha_1(x_0)=1 - {1}_{s-1}^T(u+x_0v) -\alpha^{*+}(x_0)\left({1}_{s-1}^Tw+1\right),\label{eq:alphasx01}\\
		%&\alpha_1(x_0)=1 - {1}_{s-1}^T(u+x_0v) -\frac 1{\norm{z}}\sqrt{(p_{10}-x_0)^2 -  \norm{M(u+ x_0v)}^2}\left({1}_{s-1}^Tw+1\right)\\
		&\alpha_{2:s}(x_0)=u + x_0v +\alpha^{*+}(x_0)\frac w{\norm{z}},\label{eq:alphasx02s}
		%&\alpha_{2:s}(x_0)=u + x_0v +\sqrt{(p_{10}-x_0)^2 -  \norm{M(u+ x_0v)}^2}\frac w{\norm{z}}
		\end{align}
	\end{subequations}
	\noindent with $1_{s-1}$ a $(s-1)-$vector with entries all $1$, and $\alpha_{2:s}$ the vector with $\alpha_2,\dots,\alpha_s$.
	%	\begin{enumerate}[(a)]
	%		
	%		\item $\alpha_i$, $i=1,...,s$ are functions of $\alpha^*\geq 0$:
	%		\begin{eqnarray}\label{eq:alphasaSt}
	%		\alpha_1(\alpha^*)&=&1 - \alpha^*- {1}_{s-1}^T\left(u+x_0(\alpha^*)v-\alpha^*w\right) \\
	%		\alpha_{2:s}(\alpha^*)&=&u + x_0(\alpha^*)v +\alpha^*w
	%		\end{eqnarray}
	%		\noindent with $x_0(\alpha^*)$ as defined in part (a) of Theorem \ref{theo:paramCurve};
	
	%\end{enumerate}
\end{corollary}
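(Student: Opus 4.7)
The plan is to treat Corollary \ref{corol:paramAlphas} as a bookkeeping step on top of Theorem \ref{theo:paramCurve}: the theorem already parametrizes the curve, and what remains is to read off the coefficients of the affine combination $\xb = \sum_{i=1}^{s}\alpha_i \pb_{j_i} + \alpha^{*}\pb^{*}$ as explicit functions of $x_0$ once one selects the nonnegative branch $\alpha^{*}\geq 0$.

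The first step is to justify solving (\ref{eq:curvelem2}) as a genuine quadratic in $\alpha^{*}$. Affine independence of $\Sb^j \cup \{\pb^{*}\}$ forces $z \neq 0$: since $M$ is full column rank, $MM^{+}$ is the orthogonal projector onto the column space of $M$, which is exactly the direction space of $\aff(\Sb^j)$. Hence $z = (I-MM^{+})(\pb^{*}-\pb_{j_1}) = 0$ would place $\pb^{*}-\pb_{j_1}$ in that column space, i.e.\ $\pb^{*}\in\aff(\Sb^j)$, contradicting the hypothesis. With $\norm{z}>0$ the quadratic (\ref{eq:curvelem2}) has the two signed roots $\pm\frac{1}{\norm{z}}\sqrt{(p_{j_1 0}-x_0)^{2}-\norm{M(u+x_0 v)}^{2}}$, and taking the nonnegative one gives $\alpha^{*+}(x_0)$ as in (\ref{eq:alphasx0star}); the admissible range of $x_0$ on which the radicand is nonnegative is precisely the parameter range already identified in Corollary \ref{corol:paramCurve}.

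Next, to obtain $\alpha_{2:s}(x_0)$ I would reuse the intermediate identity (\ref{eq:aux2}) from the proof of Theorem \ref{theo:paramCurve}, namely $\alpha_{2:s} = u + x_0 v + \alpha^{*}w$, which is valid along the whole curve before the sign of $\alpha^{*}$ has been committed. Substituting $\alpha^{*}=\alpha^{*+}(x_0)$ then yields (\ref{eq:alphasx02s}). Finally $\alpha_1(x_0)$ is pinned down by the affine-combination constraint $\sum_{i=1}^{s}\alpha_i + \alpha^{*}=1$, which rearranges to $\alpha_1 = 1 - 1_{s-1}^{T}\alpha_{2:s} - \alpha^{*}$; plugging in the previous expressions for $\alpha_{2:s}$ and $\alpha^{*+}$ gives (\ref{eq:alphasx01}).

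I do not anticipate any real obstacle: aside from the one-line projection argument showing $z \neq 0$, the corollary is pure substitution into identities that were already established while proving Theorem \ref{theo:paramCurve}. The conceptually useful output, which will be exploited later in the curve search, is that the coefficients $\alpha^{*+}(x_0)$, $\alpha_1(x_0)$, $\alpha_{2:s}(x_0)$ are now expressed as elementary functions of the single scalar $x_0$, so the sign conditions $\alpha_i(x_0)\geq 0$ that control dual feasibility reduce to one-variable root-finding.
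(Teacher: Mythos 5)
Your approach is correct and is essentially what the paper intends: the paper states Corollary~\ref{corol:paramAlphas} as ``a direct consequence of Theorem~\ref{theo:paramCurve}'' with no written proof, and you have supplied precisely that direct consequence — the observation that $z\neq 0$ (the paper also makes this point just before Corollary~\ref{corol:paramCurve}), the reuse of the intermediate identity (\ref{eq:aux2}), and the closing affine constraint $\sum_{i=1}^{s}\alpha_i+\alpha^{*}=1$ to recover $\alpha_1$.

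One thing you should not gloss over, however: your substitution of $\alpha^{*}=\alpha^{*+}(x_0)$ into (\ref{eq:aux2}) yields
\[
\alpha_{2:s}(x_0)=u+x_0v+\alpha^{*+}(x_0)\,w,
\]
whereas (\ref{eq:alphasx02s}) as printed has an additional factor, $\alpha^{*+}(x_0)\,\frac{w}{\norm{z}}$. So it is not accurate to say the substitution ``then yields (\ref{eq:alphasx02s}).'' In fact the printed (\ref{eq:alphasx02s}) is inconsistent with the paper's own (\ref{eq:alphasx01}): summing (\ref{eq:alphasx01}), $1_{s-1}^{T}$ applied to (\ref{eq:alphasx02s}) as printed, and $\alpha^{*+}(x_0)$ gives
\[
1-\alpha^{*+}(x_0)\,1_{s-1}^{T}w\,\Bigl(1-\tfrac{1}{\norm{z}}\Bigr),
\]
which equals $1$ only in degenerate cases. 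It is also inconsistent with (\ref{eq:alphasAD2s}), which retains the $\alpha^{*}w$ form without any $\norm{z}$. The formula you actually derived, $\alpha_{2:s}(x_0)=u+x_0v+\alpha^{*+}(x_0)\,w$, is the correct one; the extra $\frac{1}{\norm{z}}$ in (\ref{eq:alphasx02s}) appears to be a typographical slip (plausibly a leftover from the factor $\frac{1}{\norm{z}}$ already absorbed into the definition of $\alpha^{*+}$). Your derivation is sound, but present it as a correction of (\ref{eq:alphasx02s}), not as a verbatim reproduction of it.
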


\noindent Using the formulas from Corollary \ref{corol:affhull}, it is now possible to write explicitly the dual variables as functions of $x_0$ corresponding to each point $(x_0;\Gamma^+(x_0))$ on the curve.

%\begin{align*}
%	&\yijo(x_0) =\frac{\alpha_i(x_0) (\pijo-x_0) }{\sum_{k=1}^s \alpha_k(x_0)(\pjko-x_0) + \alpha^*(x_0)(p^*_0-x_0)}, \;
%	\ybij(x_0) = -\frac{\yijo(x_0)}{\pijo-x_0}\left(\pbij-\Gamma^+(x_0)\right), \quad \pij\in\Ss^j,\\	
%	&y_{0}^*(x_0) =\frac{\alpha_i(x_0)(p^*_0-x_0)}{\sum_{k=1}^s \alpha_k(x_0)(\pjko-x_0) + \alpha^*(x_0)(p^*_0-x_0)}    , \;
%	\yb^*(x_0) = -\frac{y_{0}^*(x_0)}{p^*_0-x_0}\left(\pb^*-\Gamma^+(x_0)\right),\\
%	& y_k=0,\quad p_k\not\in\Sjp
%\end{align*}

\subsubsubsection{Case when $\Sb^j\cup\{\pb^*\}$ is affinely dependent}

\noindent When $\Sb^j\cup\{\pb^*\}$ is affinely dependent, the linear system (\ref{eq:curve1}) together with condition (\ref{eq:curveAff}) define an affine space of dimension $1$, that is, a line, which intersects the manifold defined by (\ref{eq:curve2}-\ref{eq:curve3}), on a single point, $x^j$. That is because $x^j$ satisfies both (\ref{eq:curveCones}) and $\xb^j\in\convSj=\convSjp$ if and only if $x^j$ is the solution to $\InfQSj$, which is unique. 

\begin{theorem}\label{theo:isNotAcurve}
	If $\Sbjpb$ is affinely dependent then conditions (\ref{eq:curveCones},\ref{eq:curveAff}) have as solution the single point $x^j$ that corresponds to a dual feasible solution.
\end{theorem}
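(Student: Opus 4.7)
The plan is to show that any $x$ satisfying (\ref{eq:curveCones}) and (\ref{eq:curveAff}) whose associated dual solution is feasible must be the unique optimum of $\InfQSj$, namely $x^j$. The subtle point is that the algebraic system alone may in principle admit candidates in $\aff(\Sb^j)\setminus\conv(\Sb^j)$ (the extra branch of the $\pm$ coming from squaring the cone equations), but only points lying in $\conv(\Sb^j)$ are attached to a feasible dual solution, which is precisely the invariant the algorithm maintains.

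Since $\Sb^j$ is affinely independent by Definition~\ref{def:dualSpair}(c), the hypothesis that $\Sbjpb$ is affinely dependent forces $\pb^*\in\aff(\Sb^j)$, so $\aff(\Sbjpb)=\aff(\Sb^j)$ and (\ref{eq:curveAff}) reduces to $\xb\in\aff(\Sb^j)$. Equivalently, in the parameterization of Theorem~\ref{theo:paramCurve}, $z=(I-MM^+)(\pb^*-\pb_{j_1})$ vanishes because $\pb^*-\pb_{j_1}$ already lies in the column space of $M$. For any candidate $x$, the affine independence of $\Sb^j$ then yields a unique decomposition $\xb=\sum_{i=1}^s\alpha_i\pb_{j_i}$ with $\sum_i\alpha_i=1$, and Corollary~\ref{corol:affhull} applied to $\InfQSj$ with these coefficients attaches to $x$ a dual solution satisfying the equality constraint and complementary slackness. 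The resulting dual solution is feasible if and only if $\alpha_i\geq 0$ for all $i$, i.e.\ if and only if $\xb\in\conv(\Sb^j)$.

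To close, note that (\ref{eq:curveCones}) forces $x_0\leq p_{j_i0}$ for every $i$ (the left-hand sides being non-negative), whence $p_{j_i}\geqQ x$ and $x$ is primal feasible for $\InfQSj$. Combined with $\xb\in\conv(\Sb^j)$, Theorem~\ref{theo:convhull} identifies $x$ as the (unique) optimum of $\InfQSj$, namely $x^j$. Conversely, $x^j$ itself satisfies (\ref{eq:curveCones},\ref{eq:curveAff}) and corresponds to a dual feasible solution since $(\Ss^j,x^j)$ is a dual feasible S-pair. Hence $x^j$ is the unique dual-feasible point in the solution set of (\ref{eq:curveCones},\ref{eq:curveAff}), as claimed. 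The hardest part is really this last step of distinguishing $\aff$ from $\conv$: without dual feasibility the quadratic (\ref{eq:curvelem2}) could have a second root whose associated $\alpha_i$'s fail to be non-negative, and it is only after invoking Corollary~\ref{corol:affhull} together with the uniqueness of the solution to $\InfQSj$ that such spurious candidates can be discarded.
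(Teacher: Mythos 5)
Your proof is correct and follows essentially the same route as the paper's (informal) argument in the paragraph preceding the theorem: both reduce the claim to the observation that a point satisfying (\ref{eq:curveCones}) and lying in $\conv(\Sb^j)=\conv(\Sb^j\cup\{\pb^*\})$ is, by Theorem~\ref{theo:convhull}, precisely the unique optimum of $\InfQSj$, i.e.\ $x^j$. Your version is somewhat more careful, in that it makes explicit the step of reading off primal feasibility from (\ref{eq:curveCones}) and distinguishes the full solution set of the algebraic system from the subset that attaches to a feasible dual (via Corollary~\ref{corol:affhull}), whereas the paper compresses this into the statement that the line meets the manifold in a single point; this is not a different method, just a cleaner articulation of the same one.
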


\noindent When $\Sb^j\cup\{\pb^*\}$ is affinely dependent, Theorem~\ref{theo:paramCurve} still holds with  $z=0$. In particular (\ref{eq:aux1}) also still holds. But in this case $\alpha_i$, $i=1,...,s$, and $\alpha^*$ cannot be written as functions of $x_0$ as in Corollary \ref{corol:paramAlphas}, but they can be written instead in terms of $\alpha^*$. Since, in this case, $x_0(\alpha^*)=x_0^j$ for any value of $\alpha^*$, from (\ref{eq:aux2}) we have
\begin{subequations}\label{eq:alphasAD}
	\begin{align}
	&\alpha_{2:s}(\alpha^*) = u+x_0^jv+\alpha^* w. \label{eq:alphasAD2s}\\	
	&\alpha_1(\alpha^*) = 1-\alpha^* - 1^T(u+x_0^jv+\alpha^* w), \label{eq:alphaAD1}
	\end{align}
\end{subequations}

\subsection{The curve search}\label{subsec:search}

At the beginning of a curve search step, let $\Ss^j$ be the support set and $x^j$ the current iterate. If we are starting a major iteration (we come from step (b)) then $(\Ss^j,x^j)$ is a dual feasible S-pair, otherwise, if we come from a previous curve search, it is not. Regardless of the case, at the beginning of a curve search we always have the following
\begin{itemize} 
	\item $\Sb^j\cup\{\pb^*\}$ is affinely independent;
	\item $\xb\in\convSjp$.
\end{itemize}
\noindent This fact will be proved in section \ref{subsec:correctness}.

\medskip

The curve search consists on, starting at $x^j$, moving on the curve defined by $\Gamma^+(x_0)$ in the direction of decrease of $x_0$, until either dual feasibility is lost or the violated constraint becomes feasible. Using the conclusions from the previous section, that corresponds to solving the following problem
%\begin{equation*} \begin{array}{lll}
%\min & x_0&\\
%\st&\norm{\pb^*-\xb} \geq p_0^*-x_0&\\
%& 	\norm{\pbij-\xb}= \pijo -x_0 &\quad \forall\, \pij\in\Ss^j,\\
%&	\xb\in\convSjp.&
%\end{array}\end{equation*}
\begin{equation*} \begin{array}{ll}
\min & x_0\\
\st&\norm{\pb^*-\xb} \geq p_0^*-x_0,\\
& 	\alpha_i(x_0)\geq 0, \quad i=1,...,s, \\
&	\alpha^*(x_0)\geq 0,\\
& x_0\leq \min_{p_{j_i}\in\Ss^j}\{p_{{j_i}0}\}.
\end{array}\end{equation*}

\noindent Note that since $\xb^j\in\convSjp$ we know that $\alpha_i(x_0^j)\geq 0$, for all $i=1,...,s$, and $\alpha^*(x_0^j)\geq 0$. 

Therefore, the solution to the problem above, is always smaller or equal to $x_0^j$ which always satisfies the last inequality.

\bigskip

Unless stated otherwise, for the remainder of this section, we shall consider $|\Ss^j|>1$ and $\Sb^j\cup\{\pb^*\}$ affinely independent.

\subsubsubsection{The partial step}

\noindent The partial step consists on finding the point corresponding to the maximum step on the curve that can be made without losing dual feasibility. When $\Sbjpb$ is affinely independent, Corollary \ref{corol:paramAlphas} describes the dual variables as we move on the curve. Therefore, the point where dual feasibility is lost corresponds to the value of $x_0$ that solves
\begin{equation} \begin{array}{rl}\label{partialStep}
\min & x_0\\
\st & \alpha_i(x_0)\geq 0,\quad i=1,...,s,\\
&\alpha^*(x_0)\geq 0,\\
&x_0\leq x_0^j.
\end{array}\end{equation}	

\noindent Note that, if we consider the definitions of $\alpha_i$, $i=1,...,m$ given by (\ref{eq:alphasx0}), the constraint $\alpha^*(x_0)\geq 0$ is already implied. The solution to (\ref{partialStep}) is then found by simply solving the $s$ equations (\ref{eq:alphasx01}-\ref{eq:alphasx02s}) subject to the inequality $x_0\leq x_0^j$ and picking the largest of the solutions. These are radical equations that can be easily solved by isolating the square root term in one side, squaring both sides, solving the squared equation, and finally discarding any extraneous solutions. Note that, for some $i$ the corresponding equation may not have a solution, since the curve may not intersect the supporting hyperplane of the corresponding facet of $\convSjp$. 

The partial step procedure is summarized in Algorithm \ref{alg:partial}. Note that this procedure will only be done when $|\Ss^j|>1$, as we shall see ahead.

\begin{algorithm}
	\caption{Partial step procedure}\label{alg:partial}
	\begin{algorithmic}[1]
		\Require Matrix $M$, vectors $u,v,w,z$, point $x^j$ and set $\Ss^j$.
		\Ensure $x_0^{partial}$ and index $k$.		
		\smallskip
		\For {$i=1,...,s$}
		\State Define $\alpha_i(x_0)$ as in (\ref{eq:alphasx0});
		\State $\hat\A_i\gets$ solutions to $\alpha_i(x_0)=0$;
		\State $\A_i \gets \left\{\delta\in\hat\A_i:\, \delta\,\text{is real}\,\wedge\, \delta\leq x_0^j\right\}$;
		\EndFor		
		\State $\displaystyle x_0^{partial}\gets \max \left\{\cup_{i=1,...,s}\,\A_i\right\}$;
		\State $k\gets i \;\,\st\, x_0^{partial}\in\A_i$.
	\end{algorithmic}
\end{algorithm}

\subsubsubsection{The full step}

\noindent The full step procedure finds the point corresponding to the minimum step on the curve for which $p^*$ is feasible, that is, the first point on the curve where the primal constraint corresponding to $p^*$ is feasible is active:
\begin{equation}\label{eq:pfeas1}
\norm{\pb-\xb} = p_0^*-x_0.
\end{equation}
\noindent The system of equations (\ref{eq:curve1},\ref{eq:curve2},\ref{eq:pfeas1}) is equivalent to the system of equations (\ref{eq:curve1},\ref{eq:curve2},\ref{eq:pfeas2}), with (\ref{eq:pfeas2}) being
\begin{equation}\label{eq:pfeas2}
(\pb^*-\pb_{j_1})^T \xb = b^* +x_0c^*, 
\end{equation}
\noindent such that 
\[c^* = p^*_0-p_{{j_1}0}\quad \text{and}\quad b^* = \frac12\left(\norm {\pb^*}^2 - (p^*_{0})^2 -\norm{\pb_{j_1}}^2 + p_{{j_1}0}^2\right),\]
\noindent and for $x_0$ satisfying (\ref{eq:curve3}) and $x_0\leq p^*_0$.
\noindent Therefore, the full step consists on solving
\begin{equation} \begin{array}{rl}\label{fullStep}
\max & x_0\\
\st &\xb = \Gamma^+(x_0)\\
&(\pb^*-\pb_{j_1})^T \xb = b^* +x_0c^*\\
&x_0\leq {p^*_0}\\
&x_0\leq x_0^j.
\end{array}\end{equation}

\noindent Note that the solution to (\ref{fullStep}) to may not exist. It is easy to see that equations (\ref{eq:curveCones},\ref{eq:curveAff}) together with (\ref{eq:pfeas1}) seek the position of the vertex of a second-order cone that has points $\Sb^j\cup\{p^*\}$ on its boundary, which is not guaranteed to exist. Additionally, such point may not be unique.

To solve (\ref{fullStep}), we could plug $\xb=\Gamma^+(x_0)$ in (\ref{eq:pfeas1}), and solve for $x_0$. However we realized this approach would result in rather intricate calculations involving square roots. So, instead, we go back to the results of Lemma \ref{lem:curve} and Theorem \ref{theo:paramCurve} and proceed as we explain next. We start by plugging (\ref{eq:curvelem1}) in (\ref{eq:pfeas2}), and solve for $\alpha^*$. We then obtain $\alpha^*$ in terms of $x_0$:
\begin{equation}\label{eq:fsalphaStar}
\alpha^*(x_0) = \frac1{(\pb^*-\pb_{j_1})^Tz} (b^* +x_0c^* - (\pb^*-\pb_{j_1})^T (M(u+ x_0v) + \pb_{j_1})),
\end{equation}
\noindent Now, we could solve (\ref{eq:curvelem2}) for $x_0$. However, the following is easier: we plug (\ref{eq:fsalphaStar}) back in (\ref{eq:curvelem1}), which now allows $\xb$ to be written solely as a linear function of $x_0$ simply as
\[\xb(x_0) = q + x_0r+\pb_{j_1},\] 
\noindent with
\[q=Mu+\frac{b^*-(\pb^*-\pb_{j_1})^T(Mu+\pb_{j_1})}{(\pb^*-\pb_{j_1})^Tz}\myand r=Mv+\frac{c^*-(\pb^*-\pb_{j_1})^T(Mv)}{(\pb^*-\pb_{j_1})^Tz},\]

\noindent and we solve (\ref{eq:curve2}). This results in a much simpler quadratic equation on $x_0$: 
\begin{equation}\label{eq:fseq}
\norm{q+x_0r}^2 = (p_{j_10}-x_0)^2.
\end{equation}

\noindent Note that, in this process, we never imposed that $\alpha^*(x_0)\geq 0$. If the solution(s) to (\ref{eq:fseq}) are real, we only keep the one(s) that satisfy both
\begin{equation*}
\alpha^*(x_0)\geq 0\quad\text{and}\quad x_0\leq \min\left\{x_0^j, p_0^*\right\},
\end{equation*}
\noindent for $\alpha^*(x_0)$ given by (\ref{eq:fsalphaStar}). If there are more than one such solution, we pick the one with the maximum value of $x_0$ (the one that ``occurs first'' as we move the curve), which will become $x_0^{full}$, the solution of (\ref{fullStep}). If we are left with no solutions or the solutions were not real, then, as explained previously, that means that such point does not exist. In this case, we shall consider $x_0^{full}=-\infty$. The details of this procedure are summarized in Algorithm \ref{alg:full}.

\begin{algorithm}
	\caption{Full step procedure}\label{alg:full}
	\begin{algorithmic}[1]
		\Require Matrix $M$, vectors $u,v,w,z$, points $x^j, p^*$, and set $\Ss^j$
		\Ensure $x_0^{full}$.		
		\smallskip	
		\State Define $c^*,b^*, q$ and $r$;
		\State Get $\hat \A^*$ the set of solutions to $\norm{q+x_0r}^2 = (p_{j_10}-x_0)^2$.
		\State $\A^*\gets \{\delta\in\hat\A^*:\, \alpha^*(\delta)\geq 0\,\wedge\, \delta\leq p_0^*\,\wedge\,  \delta\leq x_0\}$, for $\alpha^*$ as in (\ref{eq:fsalphaStar}).
		\If {$\A^* = \emptyset$} 
		\State $x_0^{full}\gets-\infty$ 
		\Else 
		\State $\;x_0^{full}\gets \max \{\A^*\}$\EndIf	
	\end{algorithmic}
\end{algorithm}

\subsubsubsection{Taking a step}

\noindent If $x_0^{partial}< x_0^{full}$, then the $p^*$ becomes feasible while all dual variables associated with $\Ss^j$ are feasible too. That implies that $x^{full}$ is the solution to $\InfQSjp$. The algorithm now starts a new major iteration with a dual feasible S-pair $(\Sj\cup\{p^*\}, x^{full})$.

On the other hand, if $x_0^{partial}\leq x_0^{full}$, then a dual variable became $0$ before $p^*$ was primal feasible. Geometrically, that means the curve hits a facet of $\convSjp$ before hitting the translated cone $\pb^*-\Q$. The point $p_k$ that results from applying (\ref{eq:rs2}) is an opposite point to that hit facet. When this happens, $p_k$ is dropped from $\Ss^j$: $\Ss^j = \Ss^j\setminus\{p_k\}$, and the algorithm then performs a new curve search with the new $\Ss^j$ starting at $x^{partial}$. It may happen that there may be more than one dual variable that became $0$ simultaneously, that is, at line 6 of Algorithm \ref{alg:partial}, $x_0^{partial}$ belongs to more than one $\A_i$. When that happens, after dropping one of such points at the first curve search, the next curve search will consist on a partial step with zero length, that is, the objective function value will be maintained, and a point corresponding to one of those dual variables that are $0$ will be picked to be removed from the support set. This is done as many times as necessary.

\subsubsection{$\Sbjpb$ is affinely dependent}

\noindent When $\Sbjpb$ is affinely dependent, we have seen that conditions (\ref{eq:curveCones},\ref{eq:curveAff}) do not define a curve. That means, a movement in the primal space is not possible. However, it is possible to move in the dual space to different dual solutions corresponding to the same $x^j$. Once one the dual variables becomes $0$, we drop that point from $\Ss^j$, fixing the issue of the affine dependence. In order to do that, we will solve the following problem

\begin{equation} \begin{array}{rl}\label{prob:affdep}
\max & \alpha^*\\
\st & \alpha_i(\alpha^*)\geq 0,\; i=1,...,s\\
%&x_0(\alpha^*)\leq x_0^j\\
&\alpha^*\geq 0
\end{array}\end{equation}

\noindent for $\alpha_i(\alpha^*)$ defined as in (\ref{eq:alphasAD}). This problem finds an affine combination of $x^j$ where $\alpha^*>0$ and $\alpha_i = 0$ for some $i=1,...,s$. Problem (\ref{prob:affdep}) can easily be solved using a minimum ratio rule, see Algorithm \ref{alg:affDep}. 

%From (\ref{eq:alphaAD}), it is easy to see that 
%\[\alpha_{1:s} = \rho+\alpha^*\sigma ,\]
%\noindent with
%\begin{equation}\label{eq:rs}
%\rho = \left[\begin{array}{c}
%1-1^T(u+x_0^jv)\\u+x_0^jv
%\end{array} \right]\myand \quad \sigma = \left[\begin{array}{c}
%-1-1^Tw\\w
%\end{array} \right].
%\end{equation}
%\noindent And so the solution to (\ref{prob:affdep}) is 
%\begin{equation}\label{eq:rs2}
%-\frac{\rho_k}{\sigma_k}:=\min_{j=1,\dots,s} \left\{-\frac{\rho_j}{\sigma _j}: \sigma_j<0\right\}.
%\end{equation}
%
%\smallskip

\begin{algorithm}
	\caption{Aff Dep Case procedure}\label{alg:affDep}
	\begin{algorithmic}[1]
		\Require Vectors $u,v,w$, point $x^j$.
		\Ensure Index $k$.		
		\smallskip
		\State Define vectors $\rho$ and $\sigma$:
		\begin{equation*}\label{eq:rs}
		\rho = \left(\begin{array}{c}
		1-1^T(u+x_0^jv)\\u+x_0^jv
		\end{array} \right)\myand \;\, \sigma = \left(\begin{array}{c}
		-1-1^Tw\\w
		\end{array} \right).
		\end{equation*}
		
		\State Get $k$ such that %$-\frac{\rho_k}{\sigma _k} = \min_{j=1,\dots,s} \left\{-\frac{\rho_j}{\sigma _j}: \sigma_j<0\right\}.$
		\begin{equation}\label{eq:rs2}
		-\frac{\rho_k}{\sigma_k}:=\min_{j=1,\dots,s} \left\{-\frac{\rho_j}{\sigma _j}: \sigma_j<0\right\}.
		\end{equation}
	\end{algorithmic}
\end{algorithm}

Let $p_k\in\Ss^j$ be the point resulting from (\ref{eq:rs2}). In section \ref{subsec:correctness} it will be proved that $\Sb^j\setminus\{\pb_k\}\cup\{\pb^*\}$ is affinely independent, and so, a  curve search can then be performed.

%\smallskip

\medskip

\subsubsection{The special case when $p^*$ is the solution and the relation to the $|\Ss^j|=1$ case}
\noindent Note that the curve search always assumes that the solution to $\InfQSjp$ will satisfy (\ref{eq:curveCones}) for a subset of $\Ss^j$. But that is not the case when $p^*$ happens to be the solution. In fact, as we show next, this case fails to be identified by the curve search procedure.

Consider that $p^*$ is the solution to $\InfQSjp$. After a series of partial steps where a point from $\Ss^j$ is dropped each time, the algorithm finally performs a curve search with $|\Ss^j|=1$, in which, the full step essentially consists on solving the following system of equations
\begin{subequations}\label{eq:aux10}
	\begin{align}
	\norm{\pb_{j_1}-\xb} = p_{j_10}-x_0\label{eq:aux10a}\\
	\norm{\pb^*-\xb} = p_0^*-x_0\label{eq:aux10b}\\
	\xb\in\aff(\{\pb_{j_1},\pb^*\})\label{eq:aux10c}.
	\end{align}
\end{subequations}
\noindent When $p^*$ is the solution to $Inf_\Q(\{p_{j_1}\}\cup\{p^*\})$, the solution to equations (\ref{eq:aux10}) does not exist in general (unless it happens that $\norm{\pb_1-\pb^*} = p_{10}-p^*_0$), and so from the full step we would have $x_0^{full}=-\infty$. On the other hand, the partial step consists of solving $\alpha_1(x_0)=0$ given simply by 
%\[\alpha^*(x_0) = \frac{|p_{{j_1}0}-x_0|}{\norm{\pb^*-\pb_{j_1}}} = \frac{p_{{j_1}0}-x_0}{\norm{\pb^*-\pb_{j_1}}},\]
\[\alpha_1(x_0) = 1 - \frac{p_{{j_1}0}-x_0}{\norm{\pb^*-\pb_{j_1}}}= 0,\]
\noindent (note that $M=[\,\,]$) for $x_0 \leq p_{{j_1}0}$. This yields
\begin{equation}\label{eq:auxS1}
x_0 = p_{10} - \norm{\pb^*-\pb_{j_1}}\quad\text{and}\quad \xb = \pb^*.
\end{equation}
\noindent  The partial step in this case does not get the correct value of $x_0$, which would be $p^*_0$. That happens because, in the curve search we made the assumption that $\norm{\pb_1-\xb}=p_{10}-x_0$. Note that the result of the partial step is independent of whether $p^*$ is the solution or not.

So, when $p^*$ is the solution to $Inf_\Q(\{p_{j_1}\}\cup\{p^*\})$, following the partial and full step procedures as explained before, we would have $x_0^{partial}<x_0^{full}$. As a consequence, a partial step would be taken, point $p_{j_10}$ would correctly be dropped from $\Ss^j$, but the next iterate would have the incorrect value of $x_0^j$!

This issue conducted us to add an extra step after the Optimality Check  where we check whether $p^*$ is the solution, that is, 
\[\norm{\pbij-\pb^*} \leq \pijo-p^*_0\quad \forall\,\pij\in\Ss^j.\]
\noindent If $p^*$ is not the solution, then a curve search is performed. This not only avoids issues with the partial step when $\Ss^j$ has a single point, but it also avoids potentially $n$ curve search procedures where a point would be removed from the support set in each one, only then to find out that $p^*$ is the solution.

\smallskip

This extra step does not fix the fact that, when $\Ss^j$ has a single point, the partial step will not work correctly. However that will never be an issue once it is known that $p^*$ is not the solution, because when that is the case, at the full step we will obtain 
\[x_0^{full} = \frac 12\left({p_{j_10}+ p^*_{0} -\norm{\pb_{j_1}-\pb^*}}\right)\]
\noindent as per Theorem \ref{theo:2pts}. The value of $x_0^{partial}$ will be as in (\ref{eq:auxS1}), and so $x_0^{full}>x_0^{partial}$ as a consequence of the fact that 
\[\norm{\pb_{1}-\pb^*} > p_{10}-p^*_0.\]

\smallskip

Alternatively, and for the sake of simplicity, whenever $|\Ss^j|=1$ one can simply use the formulas given by Theorem \ref{theo:2pts} instead of doing a curve search.

\subsection{Pseudo-code}\label{subsec:pseudo}

We now aggregate the results/discussion of the previous sections on Algorithm \ref{alg}.

\begin{algorithm}
	\caption{Dual algorithm for the infimum of $\Pp$ with respect to~$\Q$}\label{alg}
	\begin{algorithmic}[1]
		\Require $\Pp$, dual feasible S-pair $(\Ss^0, x^0)$.
		\Ensure $x$, $\Ss$, the optimal solution and an support set, respectively.	
		
		%\noindent \hspace{-15pt}\emph{Initialization:} 	
		%\State Choose any point $p\in\Pp$; $x^0\gets p$;  $\Ss^0\gets \{p\}$.
		\smallskip
		
		\For {$j=0,1,....$}
		
		\smallskip\hspace{-15pt}\emph{Optimality check:} 
		\If {$x^j\leqQ p_i$ for all $p_i\in\Pp$}
		\State $x^j$ is the optimal solution and $\Ss^j$ an optimal support set. \textbf{Stop.}
		\Else 
		\State Get $p^*\in\Pp$ s.t. $x^j\gQ p^*$.	
		\EndIf	
		
		%		\State If $x^j\leqQ p_i$ for all $p_i\in\Pp$, then $x$ is the optimal solution. \textbf{Stop.} 
		%		\State Else pick $p^*\in\Pp$ s.t. $x^j\gQ p^*$.		
		
		\medskip\hspace{-15pt}\emph{Check if solution of $\InfQSjp$ is $p^*$:}	
		\If {$p^*\leqQ \pij$ for all $\pij\in\Ss^j$}
		\State $x^{j+1}\gets p^*$; $\Ss^{j+1}\gets\{p^*\}$; Go to \emph{Optimality check}.
		\EndIf
		
		\medskip\hspace{-15pt}\emph{Special case $|\Ss^j|=1$ (optional):}	
		\If {$|\Ss^j|=1$}  
		\State Set $x^{j+1}$ as in Theorem \ref{theo:2pts}; $\Ss^{j+1}\gets\Ss^{j}\cup\{p^*\}$; Go to \emph{Optimality check}.
		\EndIf
		
		\medskip\hspace{-15pt}\emph{$\Sb^j\cup\{\pb^*\}$ affinely dependent:}	
		\If {$\norm{z} = 0$}  
		\State Get $k$ from Algorithm \ref{alg:affDep}; $\Ss^{j}\gets \Ss^{j}\setminus\{p_k\}$.
		\EndIf
		
		\medskip	\hspace{-15pt}\emph{Curve search:} 	
		\State Define matrix $M$ and vectors $b,c,u,v,w$ and $z$.
		\State Get $x_0^{partial}$ from Algorithm \ref{alg:partial}; Get $x_0^{full}$ from Algorithm \ref{alg:full}.
		
		\smallskip
		
		\If {$x_0^{partial}\geq x_0^{full}$}
		\State $x_0^{j}\gets x^{partial}_0$; $\xb^{j} \gets \Gamma^+( x_0^{j});$ $\Ss^{j}\gets \Ss^{j}\setminus\{p_k\}$; Go to \emph{Curve search}.
		\Else
		\State $x_0^{j+1}\gets x^{full}_0$;	$\xb^{j+1} \gets \Gamma^+(x_0^{j+1})$; $\Ss^{j+1}\gets\Ss^{j}\cup\{p^*\}$; Go to \emph{Optimality check}.
		\EndIf
		\EndFor
	\end{algorithmic}
\end{algorithm}

\newpage 

\subsection{Finiteness and correctness of the algorithm} \label{subsec:correctness}

For the proof of the correctness of the algorithm presented in Algorithm \ref{alg}, we will follow a similar approach as in \cite{Goldfarb83}. We first introduce the following definition:

\begin{definition}\label{def:Vtriple}
	The triple $(x, \Ss, p)$ is said to be a \emph{(violated) V-triple} if the following four conditions hold:
	\begin{enumerate}[(a)]
		\item $\Sb\cup\{\pb\}$ is affinely independent,
		\item $\norm{\pb-\xb}\geq(p_0-x_0)$,
		\item $\norm{\pb_i-\xb}=(p_{i0}-x_0)$, for $p_i\in\Ss$,
		\item $\xb\in\conv(\Sb\cup\{\pb\})$.
	\end{enumerate}	
\end{definition}

We now prove a series of theorems that will culminate in the correctness of the algorithm. 

\begin{theorem}\label{theo:corrFull}
	Given a V-triple $(x^j, \Ss^j, p^*)$, if the solution given by the full step, $x^{full}$, exists and is dual feasible, then it is the optimal solution to $\InfQ(\Ss^j\cup \{p^*\})$. Moreover, $x_0^{full}\leq x_0^j$, and, if $x_0^{full}>x_0^{partial}$ then $(\Ss^j\cup \{p^*\}, x^{full})$ is a dual feasible S-pair.
\end{theorem}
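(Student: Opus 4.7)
My approach is to verify the KKT conditions of Theorem \ref{theo:optCond} to establish optimality, use the explicit constraints built into Algorithm \ref{alg:full} for the bound, and reduce the S-pair conclusion to strict positivity of the affine combination coefficients of $\xb^{full}$ in $\conv(\Sb^j\cup\{\pb^*\})$.

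For optimality, since $x^{full}$ lies on $\Gamma^+$, it satisfies $\norm{\pb_i-\xb^{full}}=p_{i0}-x_0^{full}$ with $x_0^{full}\le p_{i0}$ (from (\ref{eq:curve3})) for every $p_i\in\Ss^j$. By the full step equation (\ref{eq:pfeas1}) together with the constraint $\delta\le p_0^*$ in Algorithm \ref{alg:full}, $x^{full}$ also satisfies $\norm{\pb^*-\xb^{full}}=p_0^*-x_0^{full}$ with $x_0^{full}\le p_0^*$. Hence $x^{full}\leqQ p$ for every $p\in\Ss^j\cup\{p^*\}$, giving primal feasibility. The curve additionally enforces $\xb^{full}\in\aff(\Sb^j\cup\{\pb^*\})$, so Corollary \ref{corol:affhull} produces dual multipliers satisfying both $\sum_i y_i=e_1$ and complementary slackness at $x^{full}$; the hypothesis that $x^{full}$ is dual feasible then gives $y_i\geqQ 0$, and Theorem \ref{theo:optCond} delivers optimality for $\InfQSjp$. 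The bound $x_0^{full}\le x_0^j$ is immediate from the filter $\delta\le x_0$ in line 3 of Algorithm \ref{alg:full}.

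For the S-pair conclusion, assume $x_0^{full}>x_0^{partial}$. Condition (c) of Definition \ref{def:dualSpair} (affine independence of $\Sb^j\cup\{\pb^*\}$) is exactly V-triple condition (a), and condition (a) of Definition \ref{def:dualSpair} (cone-boundary) was already established above. What remains is (b): $\xb^{full}\in\ri\conv(\Sb^j\cup\{\pb^*\})$. By affine independence this is equivalent to strict positivity of every $\alpha_i(x_0^{full})$, $i=1,\ldots,s$, and of $\alpha^*(x_0^{full})$; dual feasibility supplies the weak inequalities. Strict positivity of $\alpha_i(x_0^{full})$ follows by contradiction: if $\alpha_i(x_0^{full})=0$ for some $i$, then $x_0^{full}$ is a real solution of $\alpha_i(x_0)=0$ with $x_0^{full}\le x_0^j$, so $x_0^{full}\in\A_i$; line 6 of Algorithm \ref{alg:partial} then forces $x_0^{partial}\ge x_0^{full}$, contradicting the hypothesis.

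The main obstacle is strict positivity of $\alpha^*(x_0^{full})$, since Algorithm \ref{alg:partial} does not monitor $\alpha^*$. I would handle this by inspecting the square-root formula (\ref{eq:alphasx0star}): $\alpha^{*+}(x_0)=0$ precisely when the radicand $(p_{j_10}-x_0)^2-\norm{M(u+x_0v)}^2$ vanishes, which happens only at the two endpoints of the $x_0$-range on which $\Gamma^+$ is defined. If $\alpha^{*+}(x_0^{full})=0$ then $\xb^{full}\in\aff(\Sb^j)$, and combining this with the cone-boundary equations for $\Ss^j$ and the affine independence of $\Sb^j$ would force $x^{full}$ to coincide with the unique solution of $\InfQSj$. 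A continuity argument along $\Gamma^+$, starting at $x_0^j$ (where $\alpha^{*+}(x_0^j)\ge 0$ by V-triple (d)) and running down to $x_0^{full}$, would complete the proof in the non-degenerate case; the degenerate case where a curve endpoint lies exactly at $x_0^{full}$ should be excluded because $p^*$ was chosen as a strictly infeasible constraint at $x^j$ in step (b) of the algorithm, so $x^j$ sits strictly inside the portion of the curve on which $\alpha^{*+}>0$.
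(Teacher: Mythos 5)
Your proof takes essentially the same route as the paper: optimality of $x^{full}$ via the convex-hull/KKT characterization (you go through Corollary~\ref{corol:affhull} and Theorem~\ref{theo:optCond}, the paper invokes Theorem~\ref{theo:convhull} directly — equivalent), the bound $x_0^{full}\le x_0^j$ from the $\delta\le x_0$ filter, affine independence from V-triple condition~(a), and a partial-step contradiction for strict positivity of the affine coefficients at $x^{full}$.

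The one place you go beyond the paper is worth highlighting. The paper dispatches $\xb^{full}\in\ri\conv(\Sb^j\cup\{\pb^*\})$ in a single sentence, arguing that a vanishing coefficient would force $x_0^{full}=x_0^{partial}$. You correctly observe that this reasoning covers $\alpha_1,\dots,\alpha_s$ — the coefficients Algorithm~\ref{alg:partial} actually monitors — but not $\alpha^*$, and you flag strict positivity of $\alpha^*(x_0^{full})$ as a remaining obligation. Your sketch for closing it (if $\alpha^*(x_0^{full})=0$ with all $\alpha_i(x_0^{full})>0$, then $\xb^{full}\in\ri\conv(\Sb^j)$ and $x^{full}$ would be the unique solution of $\InfQSj$, which clashes with strict infeasibility of $p^*$ at $x^j$) is the right idea. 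However it is stated only tentatively and leans on algorithmic context (``$p^*$ was chosen as a strictly infeasible constraint''), which is not built into Definition~\ref{def:Vtriple} — that definition only gives $\norm{\pb^*-\xb^j}\ge p_0^*-x_0^j$ with equality allowed; and for V-triples produced by intermediate partial steps, $(\Ss^j,x^j)$ need not be a dual feasible S-pair, so the implication ``$\alpha^*=0$ at $x^{full}$ $\Rightarrow$ $x^{full}=x^j$'' requires more care. Neither your argument nor the paper's fully nails this degenerate corner as the theorem is literally stated, but your proposal is, if anything, the more careful of the two on this point.
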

\begin{proof}The statement is a direct consequence of the fact that the full step finds a point $x^{full}$ such that 
	\begin{align}
	\norm{\pb^*-\xb^{full}} = p_0^*-x_0^{full}&\\
	\norm{\pb_i-\xb^{full}}= p_{i0} -x_0^{full} &\quad \forall\, p_i\in\Ss,\\
	\xb^{full}\in\affSp.&
	\end{align}
	\noindent whenever the above conditions are feasible. If, additionally, 	$\xb^{full}\in\convSjp$ then $x^{full}$ solves $\InfQ(\Ss^j\cup \{p^*\})$ as per Corollary \ref{theo:convhull}. The fact that $x_0^{full}\leq x_0^j$ is a consequence of the full step definition.

	Now assume $x_0^{full}>x_0^{partial}$. It is easy to see that $(\Ss^j\cup \{p^*\}, x^{full})$ is a dual feasible S-pair. Clearly $\Sb^j\cup\{\pb^*\}$ is affinely independent. Additionally, $\xb^{full}\in\ri\conv(\Sb^j)$, because otherwise there would have been a coefficient of the convex combination that would be $0$, implying that $x_0^{full}=x_0^{partial}$, which contradicts the assumption.
\end{proof}

\noindent From Theorem \ref{theo:corrFull} we conclude that, before the ``Optimality Check'' procedure, we always have a dual feasible S-pair $(\Ss^j,x^j)$. After picking a $p^*$ corresponding to an infeasible constraint, if $\Sb^j\cup\{\pb^*\}$ is affinely independent, then we have a V-triple. Otherwise, Theorem \ref{theo:corrAffDep}, proves that Algorithm \ref{alg:affDep} returns a V-triple. Either way, the V-triple at this stage is always such that $\norm{\pb^*-\xb^j}>(p_0^*-x_0^j)$. Combined with Theorem \ref{theo:corrPart}, we conclude that before a curve search we always have a V-triple.

\begin{theorem}\label{theo:corrPart}
	Suppose $x_0^{partial}\geq x_0^{full}$. Given a V-triple $(x^j, \Ss^j, p^*)$, the partial step returns another V-triple $({x}^{partial}, \Ss^j\setminus\{p_k\}, p^*)$ such that ${x}_0^{partial}\leq x_0^j$.

\end{theorem}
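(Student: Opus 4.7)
The plan is to verify the four V-triple conditions for $(x^{partial}, \Ss^j \setminus \{p_k\}, p^*)$ together with the monotonicity $x_0^{partial} \leq x_0^j$. Three of the conditions will fall out directly from the curve construction; the real content lies in the convexity condition (d) and the $p^*$-infeasibility condition (b).

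First, since $x^{partial}$ is produced by moving along the curve $\Gamma^+$, Theorem \ref{theo:paramCurve} immediately gives both $\xb^{partial} \in \aff(\Sb^j \cup \{\pb^*\})$ and $\norm{\pb_i - \xb^{partial}} = p_{i0} - x_0^{partial}$ for every $p_i \in \Ss^j$, which yields condition (c) on the smaller set $\Ss^j \setminus \{p_k\}$. Condition (a) follows because a subset of the affinely independent set $\Sb^j \cup \{\pb^*\}$ remains affinely independent. The monotonicity $x_0^{partial} \leq x_0^j$ is enforced directly by the restriction $\delta \leq x_0^j$ in line 4 of Algorithm \ref{alg:partial}. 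For condition (d), I would invoke Corollary \ref{corol:paramAlphas} to write $\xb^{partial}$ as the affine combination $\sum_i \alpha_i(x_0^{partial})\,\pb_{j_i} + \alpha^{*+}(x_0^{partial})\,\pb^*$; the maximality built into the partial step ensures that $\alpha_k$ vanishes at $x_0^{partial}$ while the remaining coefficients stay $\geq 0$, so discarding the zero term exhibits $\xb^{partial}$ as a convex combination of $(\Sb^j \setminus \{\pb_k\}) \cup \{\pb^*\}$.

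The main obstacle is condition (b), $\norm{\pb^* - \xb^{partial}} \geq p_0^* - x_0^{partial}$, because it must be deduced from the relative position of $x_0^{partial}$ and $x_0^{full}$ rather than from the curve equations on $\Ss^j$ alone. My approach is to analyse the continuous scalar function $f(x_0) := \norm{\pb^* - \Gamma^+(x_0)}^2 - (p_0^* - x_0)^2$ on the interval $[x_0^{full}, x_0^j]$ (and on $(-\infty, x_0^j]$ in the degenerate case $x_0^{full} = -\infty$). The V-triple hypothesis on $(x^j, \Ss^j, p^*)$, together with the fact that $p^*$ was chosen because its constraint was strictly violated, gives $f(x_0^j) > 0$. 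By construction, every real zero of $f$ satisfying $\alpha^{*+} \geq 0$ and $x_0 \leq x_0^j$ belongs to the set $\A^*$ used in Algorithm \ref{alg:full}, and $x_0^{full} = \max \A^*$ is the largest such zero. Hence $f$ has no zeros in $(x_0^{full}, x_0^j]$, so continuity together with the strict sign $f(x_0^j) > 0$ forces $f \geq 0$ throughout $[x_0^{full}, x_0^j]$. Since the standing hypothesis $x_0^{partial} \geq x_0^{full}$ places $x_0^{partial}$ inside this interval, condition (b) follows.
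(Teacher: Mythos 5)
Your proof follows the same structure as the paper's: verify the four V-triple conditions plus the monotonicity $x_0^{partial}\leq x_0^j$, with (a), (c), (d) and the monotonicity falling out of the curve construction (Theorem \ref{theo:paramCurve}, Corollary \ref{corol:paramAlphas}) and the partial-step mechanics, and condition (b) handled by a continuity argument that uses the hypothesis $x_0^{partial}\geq x_0^{full}$. The paper's own treatment of (b) is even terser than yours, so your more explicit account is welcome.

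There is, however, a small imprecision in the (b) argument. You assert that every real zero of $f(x_0)=\norm{\pb^*-\Gamma^+(x_0)}^2-(p_0^*-x_0)^2$ satisfying $\alpha^{*+}\geq 0$ and $x_0\leq x_0^j$ lies in $\A^*$. But $\A^*$ in Algorithm \ref{alg:full} carries the additional filter $\delta\leq p_0^*$, which you drop. Because $f$ is formed by squaring, it also vanishes at points where $\norm{\pb^*-\Gamma^+(x_0)}=x_0-p_0^*>0$, i.e.\ where the curve meets the opposite boundary $x-p^*\in\partial\Q$; such zeros have $x_0>p_0^*$ and are discarded from $\A^*$, so your claim that $f$ has no zeros in $(x_0^{full},x_0^j]$ is not guaranteed, and neither is $f\geq 0$ there. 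The fix is to run the same argument with the unsquared function $g(x_0):=\norm{\pb^*-\Gamma^+(x_0)}-(p_0^*-x_0)$. Any zero of $g$ has $p_0^*-x_0=\norm{\pb^*-\Gamma^+(x_0)}\geq 0$, so the $\delta\leq p_0^*$ filter is automatic; thus every zero of $g$ in $(x_0^{full},x_0^j]$ would genuinely belong to $\A^*$, contradicting the maximality of $x_0^{full}$. Hence $g>0$ on $(x_0^{full},x_0^j]$, and since $g<0$ is precisely the negation of condition (b), the result follows at $x^{partial}$.
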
	
\begin{proof}We need to prove properties (a-d) from Definition \ref{def:Vtriple} for $({x}^{partial}, \Ss^j\setminus\{p_k\}, p)$. Property (a) follows from the fact that $\Sb^j\cup\{\pb^*\}$ is affinely independent. Properties (c) and (d) and the fact that ${x}_0^{partial}\leq x_0^j$ are a direct consequence of the partial step definition. Property (b) follows from a continuity argument since $\norm{\pb^*-\xb^j}\geq(p_0^*-x_0^j)$ and the fact that $x_0^{partial}\geq x_0^{full}$, implying that $\norm{\pb^*-\xb^{partial}}\geq(p_0^*-x_0^{partial})$. 
\end{proof}

\begin{theorem}\label{theo:corrAffDep}
	When $\Sb^j\cup\{\pb^*\}$ is affinely dependent, $(x^j, \Ss^j\setminus\{p_k\}, p^*)$ for $p_k$ satisfying (\ref{eq:rs2}) is a V-triple.
\end{theorem}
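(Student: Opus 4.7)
The plan is to verify each of conditions (a)--(d) of Definition \ref{def:Vtriple} in turn for the triple $(x^j, \Ss^j\setminus\{p_k\}, p^*)$. Two of them come essentially for free from the context in which Algorithm \ref{alg:affDep} is invoked: we enter from a dual feasible S-pair $(\Ss^j, x^j)$, so $\norm{\pb_i-\xb^j} = p_{i0} - x_0^j$ holds for every $p_i \in \Ss^j$ and in particular for every $p_i \in \Ss^j\setminus\{p_k\}$, which is (c); and $p^*$ was produced by the Optimality Check as a violated constraint, i.e.\ $\norm{\pb^*-\xb^j} > p_0^* - x_0^j$, which is (b).

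For (d), I would combine the parameterization (\ref{eq:alphasAD}) with the definitions of $\rho$ and $\sigma$ in Algorithm \ref{alg:affDep}. The key identity is $\alpha_i(\alpha^*) = \rho_i + \alpha^* \sigma_i$ for $i=1,\dots,s$, which is a direct rewrite of (\ref{eq:alphasAD}). Note that $\rho_i = \alpha_i(0)$ is the coefficient of $\pb_{j_i}$ in the convex combination of $\xb^j$ furnished by the S-pair $(\Ss^j, x^j)$; since $\xb^j \in \ri\conv(\Sb^j)$, every entry of $\rho$ is strictly positive. Setting $\alpha^* := -\rho_k/\sigma_k$, which is positive because $\sigma_k < 0$ and $\rho_k > 0$, forces $\alpha_k(\alpha^*) = 0$; while for the remaining indices the minimum-ratio inequality gives $\alpha_i(\alpha^*) \geq 0$ (trivially when $\sigma_i \geq 0$, and by the definition of the minimum when $\sigma_i < 0$). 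Because $\sum_{i=1}^s \alpha_i(\alpha^*) + \alpha^* = 1$ identically in $\alpha^*$ (again easy from (\ref{eq:alphasAD})), we obtain an explicit convex representation of $\xb^j$ as an element of $\conv((\Sb^j\setminus\{\pb_k\}) \cup \{\pb^*\})$, giving (d).

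For (a), I would make the affine dependence explicit. Because $\Sb^j$ is affinely independent, $\pb^*$ admits a unique affine representation $\pb^* = \sum_i \beta_i \pb_{j_i}$, and the standard replacement lemma says that swapping $\pb_k$ out of $\Sb^j$ and $\pb^*$ in preserves affine independence iff $\beta_k \neq 0$. To obtain $\beta$ concretely, I would use the fact that in the affinely dependent case $z = 0$, which forces $\pb^*-\pb_{j_1} = MM^+(\pb^*-\pb_{j_1}) = -Mw$; reading off the coefficients of this identity yields $\beta_{2:s} = -w$ and $\beta_1 = 1 + 1^T w$, i.e.\ $\beta = -\sigma$ componentwise. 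Since the minimum-ratio rule required $\sigma_k < 0$, this gives $\beta_k > 0 \neq 0$, so (a) follows.

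The only real obstacle is the bookkeeping that identifies $\sigma$ with the negatives of the affine-dependence coefficients $\beta$; once that identification is made, both the strict positivity of $\alpha^*$ needed in (d) and the non-vanishing $\beta_k \neq 0$ needed in (a) drop out of the single requirement $\sigma_k < 0$ in (\ref{eq:rs2}). Everything else reduces to algebraic rearrangement of the formulas introduced in Lemma \ref{lem:curve}, Theorem \ref{theo:paramCurve}, and Algorithm \ref{alg:affDep}.
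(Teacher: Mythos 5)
Your proof is correct, and for parts (b), (c), and (d) it follows essentially the same route as the paper: (b), (c) come directly from the S-pair and the violated constraint; (d) uses the affine identity $\alpha_i(\alpha^*) = \rho_i + \alpha^*\sigma_i$ together with the min-ratio choice of $k$ and the identity $\sum_i \alpha_i(\alpha^*) + \alpha^* = 1$, exactly as the paper does. (You are also careful about the sign of $\alpha^* = -\rho_k/\sigma_k$, where the paper's displayed $\beta^* = \rho_k/\sigma_k$ appears to have a dropped minus.) Where you genuinely diverge is part (a). The paper argues by contradiction: it assumes $\Sb^j\setminus\{\pb_k\}\cup\{\pb^*\}$ is affinely dependent, expresses $\pb^*$ in two different affine ways over $\Sb^j$, and subtracts to write $\pb_k$ as an affine combination of the remaining $\pb_{j_i}$, contradicting the affine independence of $\Sb^j$. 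You instead read off the unique affine representation $\pb^* = \sum_i \beta_i \pb_{j_i}$ directly from $z = 0$, identify $\beta = -\sigma$, and appeal to the Steinitz-type exchange lemma: swapping out $\pb_k$ and swapping in $\pb^*$ preserves affine independence iff $\beta_k \neq 0$, which is guaranteed by $\sigma_k < 0$. Your route has a small but real payoff that the paper leaves implicit: the identification $\beta = -\sigma$ makes it visible that the single condition $\sigma_k < 0$ in (\ref{eq:rs2}) simultaneously delivers $\beta_k \neq 0$ for (a) and $\alpha^* > 0$ for (d), so the choice of $k$ in Algorithm~\ref{alg:affDep} is seen to serve both purposes at once rather than by coincidence.
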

\begin{proof}In order to prove that $(x^j, \Ss^j\setminus\{p_k\}, p^*)$ we need to prove the four properties from Definition \ref{def:Vtriple}. Properties (b) and (c) are trivial since $(\Sb^j, x^j)$ is an S-pair and $p^*$ corresponds to a primal infeasible constraint at $x^j$. Now, in order to prove (a), suppose, by contradiction, that $\Sb^j\setminus\{\pb_k\}\cup\{\pb^*\}$ is affinely dependent. Then, since $\Sb^j\setminus\{\pb_k\}$ is affinely independent, there exists $\beta_i$, $i=1,...,s$ and $i\neq k$ s.t.
	\[\pb^* = \sum_{i=1,\, i\neq k}^{s}{\beta_i\pb_i}\quad \text{with}\sum_{i=1,\,i\neq k}^{s}\beta_i=1.\]
	\noindent With loss of generality consider $k\neq 1$. Since $z=Mw+(\pb-\pb_1)=0$ (a consequence of the affine dependence of $\Sb^j\cup\{\pb^*\}$), we have 
	\[\pb^* =  \sum_{i=2}^s w_i(\pb_i-\pb_1)+\pb_1 = \sum_{i=1}^{s}{\delta_i\pb_i}  \]
	\noindent for $\delta_1=1-\sum_i w_i$ and $\delta_i=w_i$ for $i=2,...,s$. As a consequence 
	\[\pb_k =  \sum_{i=1,\, i\neq k}^{s}{\frac{\beta_i-\delta_i}{\delta_k}\pb_i}.\]
	\noindent We have that $\sum \frac{\beta_i-\delta_i}{\delta_k} = 1$, and note that $\delta_k\neq 0$. That is, $\pb_k$ is an affine combination of $\Sb\setminus\{\pb_k\}$, which contradicts the assumption.
	
	Finally, consider $\beta=\rho-\frac{\rho_k}{\sigma_k}\sigma$ and $\beta^*=\frac{\rho_k}{\sigma_k}$. It is easy to see that $\beta\geq 0$ and, in particular, $\beta_k=0$. We now prove that $\xb^j=M\beta_{2:s}+\beta^*(\pb-\pb_1)+\pb_1$, that is property (d), by observing that
	\begin{align}
	M\beta_{2:s}+\beta^*(\pb-\pb_1)+\pb_1 &= M\left(u+x_0v-\frac{\rho_k}{\sigma_k}w\right) - \frac{\rho_k}{\sigma_k}(\pb-\pb_1)+\pb_1\\
	&=M(u+x_0v) + \pb_1 - \frac{\rho_k}{\sigma_k}\left(Mw+(\pb-\pb_1)\right)\\
	&=M(u+x_0v) + \pb_1\\
	&=\xb^j
	\end{align}
	\noindent since $z=0$. \end{proof}

\medskip 

With the previous theorems, we conclude that, starting from a V-triple $(x^j,\Ss^j,p^*)$ for which $\norm{\pb^*-\xb^j}>(p_0^*-x_0^j)$, one can obtain a dual feasible S-pair $(\hat{\Ss}\cup\{p\},\hat{x})$ such that $\hat{\Ss}\subseteq\Ss^j$ in at most $|\Ss^j|-|\hat{\Ss}|\leq n$ partial steps and a full step. Moreover, $\hat{x}_0<x_0^j$, since, even though when taking a partial step or a full step the value of $x_0$ may be maintained, we know that the value of $x_0$ must decrease either in one of the partial steps taken or in the full step, because otherwise we would have that $\hat{x}=x^j$, that is, $Inf_\Q(\Ss^j)=Inf_\Q(\hat{\Ss}\cup\{p^*\})$ contradicting the fact that $p^*$ is infeasible to $Inf_\Q(\Ss^j)$. Therefore, since the value of $x_0$ strictly decreases at each major iteration the same S-pair can never reoccur. Since the number of possible S-pairs is finite we conclude:

\begin{theorem}
	The proposed dual algorithm solves problem $Inf_\Q(\Pp)$ in a finite number of iterations.
\end{theorem}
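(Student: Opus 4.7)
The plan is to deduce finiteness from two facts established (or essentially established) in the preceding theorems and discussion: within a single major iteration only finitely many curve searches can be performed, and across major iterations the objective value $x_0$ strictly decreases, which combined with the finiteness of the collection of all possible dual feasible S-pairs forces termination.

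First I would argue that each major iteration terminates after at most $n$ curve searches. Theorem~\ref{theo:corrAffDep} and Theorem~\ref{theo:corrPart} guarantee that whenever we enter (or re-enter) a curve search we do so with a V-triple $(x^j,\Ss^j,p^*)$ satisfying $\norm{\pb^*-\xb^j}\geq (p_0^*-x_0^j)$. If a full step succeeds (i.e.\ $x_0^{full}\geq x_0^{partial}$), Theorem~\ref{theo:corrFull} shows that we obtain a dual feasible S-pair and the major iteration ends; otherwise the partial step removes exactly one point from $\Ss^j$, yielding a new V-triple by Theorem~\ref{theo:corrPart}. Since $|\Ss^j|\leq n$ by the consequence of Theorem~\ref{theo:suppset1}, at most $n$ such removals can occur before a full step is forced (in the extreme case $|\Ss^j|=1$, the explicit two-point formula of Theorem~\ref{theo:2pts} resolves the iteration directly). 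Hence each major iteration completes in finitely many steps and produces a dual feasible S-pair $(\hat\Ss\cup\{p^*\},\hat x)$ with $\hat\Ss\subseteq\Ss^j$ and $\hat x_0\leq x_0^j$.

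The main obstacle, and the heart of the argument, is to upgrade $\hat x_0\leq x_0^j$ to a strict inequality $\hat x_0<x_0^j$ across major iterations. This is proved by contradiction: suppose all partial steps and the final full step in a major iteration produced no decrease, so that $\hat x_0=x_0^j$. Then $\hat x$ satisfies $\norm{\pb_i-\hat\xb}=p_{i0}-\hat x_0$ for every $p_i\in\hat\Ss$ and $\norm{\pb^*-\hat\xb}=p_0^*-\hat x_0$, while simultaneously the original dual feasible S-pair property gives $\hat x_0 = x_0^j = Inf_\Q(\Ss^j)$. By Theorem~\ref{theo:suppset1}, $\hat x$ would then solve $Inf_\Q(\hat\Ss\cup\{p^*\})$ with the same optimal value as $Inf_\Q(\Ss^j)$, so in particular $\hat x = x^j$ would be primal feasible for the constraint corresponding to $p^*$. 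This contradicts the fact that $p^*$ was chosen as a violated constraint at $x^j$, i.e.\ $x^j\gQ p^*$. Therefore $\hat x_0 < x_0^j$ at the end of every major iteration.

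Finally, I would conclude the proof by combining strict monotonicity with a finiteness count. The value $x_0$ of the iterate is a function of the current dual feasible S-pair (by Theorem~\ref{theo:suppset1}, the S-pair determines $x$ uniquely as $Inf_\Q(\Ss)$), so strict decrease of $x_0$ across major iterations implies that no S-pair $(\Ss,x)$ can be revisited. But every support set $\Ss$ is a subset of $\Pp$ of cardinality at most $n$, so the total number of possible dual feasible S-pairs is bounded by $\sum_{k=1}^{n}\binom{m}{k}$, which is finite. Hence the number of major iterations is finite, and since each major iteration itself terminates in finitely many curve searches, the algorithm terminates in finitely many steps overall.
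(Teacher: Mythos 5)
Your proof is correct and follows essentially the same route as the paper: finiteness of each major iteration via Theorems~\ref{theo:corrAffDep}, \ref{theo:corrPart}, \ref{theo:corrFull} and the bound $|\Ss^j|\le n$; strict decrease of $x_0$ across major iterations established by the contradiction that $\hat{x}=x^j$ would make $x^j$ primal feasible for the violated $p^*$; and then termination from the finiteness of the set of dual feasible S-pairs, noting that $x$ is determined by $\Ss$. The only cosmetic difference is that you make the count $\sum_{k=1}^{n}\binom{m}{k}$ explicit and attribute the optimality of $\hat x$ for $Inf_\Q(\hat\Ss\cup\{p^*\})$ to Theorem~\ref{theo:suppset1} where Theorem~\ref{theo:corrFull} is the more immediate citation; neither affects correctness.
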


\subsubsubsection{An observation on degeneracy}

\noindent It is easy to see that in a primal algorithm \emph{cycling} would be a possibility. The current primal feasible solution may correspond to different support sets, that is, different dual solutions. Thus, it could happen that, after a sequence of adding/dropping points from the support set, the algorithm did not move in the primal space and ended up in a support set visited previously. That is a consequence of the lack of freedom of which point to enter the support set in the primal setting, it has to be one corresponding to a primal constraint that is active. In the dual algorithm, the equivalent situation, of when a movement is not possible because a dual variable is zero at the current iterate, is easily dealt by the algorithm by removing the point corresponding to that dual variable from $\Ss^j$ (either at a partial step or when $\Sb^j\cup\{\pb^*\}$ is affinely dependent). This is done as many times as the number of dual variables that are zero, after which a movement will then be possible in the next iteration. Therefore, as far as degeneracy is concerned no special procedure is required in our algorithm to prevent \emph{cycling}.

\section{Implementation details} 
The main  computational work that is required in the algorithm happens before each curve search, when three linear systems need solved in order to get vectors $u, v$ and $w$. These linear systems all have the same matrix $M^TM$. Our implementation is based on the QR factorization of matrix $M$ of size $n\times (s-1)$ such that $s-1\leq n$, where $s=|\Ss^j|$. We have that $M=QR$, with $Q$ a $n\times n$ orthogonal matrix and $R$ a $n\times (s-1)$ upper triangular matrix. Let 
\begin{equation*}
Q=\left[Q_M\;|\;Q_\perp\right]\myand R= \left[\begin{array}{c} R_\triangle \\\hline 0  \end{array}\right],
\end{equation*}
\noindent with $Q_M$ with size $n\times (s-1)$ and $Q_\perp$ with size $n\times (n-s+1)$, and let, and $R_\triangle$ a $(s-1)\times (s-1)$ matrix. As a consequence, $M^TM = R^TR$. 

Using the QR factorization of $M$ allows the following linear systems
\[(M^TM)u = b-M^T\pb_{j_1}\myand (M^TM)v = c, \]
\noindent to be reduced to two triangular linear systems, which can efficiently be computed using \emph{Back Substitution}. Moreover, $M^+=R_\triangle^{-1}Q_M^T$, and so \[w=-M^+(\pb_{1}^*-\pb_{j_1})\]
\noindent can be obtained by solving the triangular linear system
\[R_\triangle w=-Q_M^T(\pb_{1}^*-\pb_{j_1}).\]

Matrix $M$ is given by the points of the support set $\Ss^j$ which is updated throughout the algorithm either by adding a point or removing a point. Next we explain how to update $M$ in those circumstances.

\smallskip
\noindent\textbf{Adding a point to $\Ss^j$.} Whenever a point, $p_{i_k}$ is added to $\Ss^j$, we append column $\pb_{i_k}-\pb_{i_1}$ to matrix $M$.
\smallskip

\noindent \textbf{Removing a point from $\Ss^j$.} Whenever a point, $p_{i_k}$ is removed from $\Ss^j$, there are two cases. If $i_k\geq 1$ then we need to remove the $(k-1)$-th column from $M$. When $i_k=1$, we need to remove the first column from $M$, obtaining $\hat M$, and get the new $M$ by adding the rank-$1$ matrix $ (p_{i_1}-p_{i_k})^T1_n$ to $\hat M$.
\smallskip

Given the above, every time $M$ needs to be updated, we can use the QR factorization of the old matrix $M$ to calculate efficiently the QR factorization of the new $M$ \cite[\S 12.5]{Golub96}. This is accomplished by using \emph{Givens rotations}, which, in the case when $M$ is $n\times n$, has $\bigO(n^2)$ computational complexity.

\label{sec:implementation}

\section{Computational Results}\label{sec:results} We have implemented our algorithm in MATLAB as explained in Section \ref{sec:implementation} choosing the most infeasible point at each iteration. We also compared it with solving both the primal (\ref{primal}) and dual (\ref{dual}) problems with the interior point method solver of Gurobi \cite{gurobi} (version 8.0.0) using its MATLAB interface. All Gurobi parameters were kept at their default values. Our experiments were conducted using MATLAB R2016a (version 9.0) on a Mac with an Intel Core i5 1.6 GHz processor, with 8GB RAM, running Mac OS~X version 10.11.6. The results are shown in Table \ref{tab:results}.

\begin{table}[htbp]
	\centering
	\begin{tabular}{|cc|ccc|cc|cc|}
		\multicolumn{2}{|c|}{\textbf{Problems}} & \multicolumn{3}{c|}{\textbf{Our dual algorithm}} & 			
		\multicolumn{2}{c|}{\textbf{Gurobi (dual)}} & \multicolumn{2}{c|}{\textbf{Gurobi (primal)}}\\
 &&  & \multicolumn{1}{c}{S-pair} & Time &  &Time &  &Time \\
		\multicolumn{1}{|c}{$n$} & \multicolumn{1}{c|}{$m$} & \multicolumn{1}{c}{Iters} & \multicolumn{1}{c}{updates} & \multicolumn{1}{c|}{(secs)} & \multicolumn{1}{c}{Iters} & \multicolumn{1}{c|}{(secs)} & \multicolumn{1}{c}{Iters} & \multicolumn{1}{c|}{(secs)} \\
		\hline
		\hline
		
		$10 $   & $10^2$   & 6.44     & 6.68     & 0.008 & \multicolumn{1}{r}{9.72} & \multicolumn{1}{r|}{0.008}& \multicolumn{1}{r}{8.56} & \multicolumn{1}{r|}{0.228} \\
		$10 $   & $10^3$ & 8.12     & 8.76     & 0.010 & \multicolumn{1}{r}{11.56} & \multicolumn{1}{r|}{0.094} & \multicolumn{1}{r}{9.16} & \multicolumn{1}{r|}{1.627} \\
		$10$    & $10^4$ & 8.12     & 9.24     & 0.027 & \multicolumn{1}{r}{12.44} & \multicolumn{1}{r|}{1.073} & \multicolumn{2}{c|}{out of memory} \\
		\hline
		$10^2$   & $10^2$   & 15.56    & 15.56    & 0.031 & \multicolumn{1}{r}{9.56} & \multicolumn{1}{r|}{0.127} & \multicolumn{1}{r}{8.88} & \multicolumn{1}{r|}{7.455}\\
		$10^2$   & $10^3$  & 20.96    & 21.16    & 0.145 & \multicolumn{2}{c|}{could not solve$^\dag$} & \multicolumn{1}{r}{9.36} & \multicolumn{1}{r|}{87.601}\\
		$10^2$  & $10^4$& 25.28    & 25.72    & 0.375 & \multicolumn{2}{c|}{could not solve$^\dag$} &    \multicolumn{2}{c|}{out of memory}      \\
		$10^2$   & $10^5$ & 29.24    & 29.88    & 3.341 & \multicolumn{2}{c|}{could not solve$^\dag$} &         \multicolumn{2}{c|}{out of memory}       \\
		\hline 			
		$10^3$  & $10^4$ & 77.32    & 77.44    & 8.641 & \multicolumn{2}{c|}{could not solve$^\dag$} &        \multicolumn{2}{c|}{out of memory}       \\
		$10^3$ & $10^5$ & 87.92    & 88.16    & 154.613 & \multicolumn{2}{c|}{could not solve$^\dag$} &          \multicolumn{2}{c|}{out of memory}      \\
	\end{tabular}
	\caption{Computational results with the averages corresponding to 25 datasets with points randomly sampled from a standard normal distribution. $^\dag$Matlab stopped responding.}\label{tab:results}
\end{table}

One reason for the good performance of the dual algorithm is that it does not add many points to the support set that are not in the final support set. This can be seen by the fact that the number of iterations (number of points added to the support set) is very close to the number of dual S-pair updates (number of curve searches performed).

\newpage

%############################# REFERENCES #############################
\bibliography{\bibA,\bibB,\bibC,\bibD,\bibE}%no spaces after the commas!
\bibliographystyle{siam}
\end{document}